\numberwithin{equation}{section}
\newtheorem{theorem}{Theorem}[section]
\newtheorem{lemma}[theorem]{Lemma}
\newtheorem{cor}[theorem]{Corollary}
\newtheorem{rem}[theorem]{Remark}
\newtheorem{definition}[theorem]{Definition}
\newcommand{\R}{\mathbb{R}}
\renewcommand{\tilde}{\widetilde}
\DeclareMathSymbol{\leqslant}{\mathalpha}{AMSa}{"36} 
\DeclareMathSymbol{\geqslant}{\mathalpha}{AMSa}{"3E} 
\DeclareMathSymbol{\eset}{\mathalpha}{AMSb}{"3F}     
\renewcommand{\leq}{\;\leqslant\;}                   
\renewcommand{\geq}{\;\geqslant\;}                   
\newcommand{\norm}[1]{\left\lVert#1\right\rVert}
\title[Regularization by noise for PAM and related models]{Regularization by random translation of potentials for the continuous PAM and related models in arbitrary dimension }
\author{Florian Bechtold}
\date{\today}
\address{Florian Bechtold: Fakult\"at f\"ur Mathematik,
	Universit\"at Bielefeld, 33501 Bielefeld, Germany}
\email{fbechtold@math.uni-bielefeld.de}
\begin{document}

\maketitle
\begin{abstract}
  We study a regularization by noise phenomenon for the continuous parabolic Anderson model with a potential shifted along paths of fractional Brownian motion. We demonstrate that provided the Hurst parameter is chosen sufficiently small, this shift allows to establish well-posedness and stability to the corresponding problem - without the need of renormalization - in any dimension. We moreover provide a robustified Feynman-Kac type formula for the unique solution to the regularized problem building upon regularity estimates for the local time of fractional Brownian motion as well as non-linear Young integration.
\end{abstract}

\section{Introduction}
Consider the problem
\[
\partial_t u=\frac{1}{2}\Delta u-Vu,\qquad u(0)=f.
\]
As it is well known, provided $f$ and $V$ are sufficiently smooth, we obtain the unique solution to the above problem via the Feynman-Kac formula
\[
u(t,x)=\mathbb{E}^x\left[f(W_t)\exp{\left(-\int_0^t V(s, W_{t-s})ds\right)}\right],
\]
where $(W_t)_t$ is a standard Brownian motion in $\R^d$ on a stochastic basis $(\Omega, \mathcal{F}, \mathbb{P}, (\mathcal{F}_t)_t)$ and $\mathbb{E}^x[(\cdot)]$ denotes the expectation conditional on the Brownian motion starting in $x\in \R^d$. In the case of $V$ enjoying only distributional regularity, this reasoning is no longer applicable. A famous example in this context is the continuous parabolic Anderson model (PAM)
\begin{equation}
    \partial_t u=\frac{1}{2}\Delta u-\xi u,\qquad u(0)=f,
    \label{pam}
\end{equation}
where $\xi$ is spatial white noise on $\mathbb{T}^d$ or $\R^d$.  While in the case $d=1$ \eqref{pam} is still well posed due to the regularizing effect of the Laplacian, already $d=2,3$ require renormalization and the implementation of advanced tools from the theory of singular SPDEs such as regularity structures or paracontrolled distributions \cite{LABBE20193187}, \cite{hairerlabbe}, \cite{hairerlabbe1}, \cite{gubinelli_imkeller_perkowski_2015}, \cite{allez2015continuous}, \cite[Example 1.21]{koenig}. These approaches break down for $d\geq 4$. \\
\\
In the following, we intend to study evolution problems of the above form with a potential shifted along paths of fractional Brownian motion $w^H$ of Hurst parameter $H$, i.e.
\begin{equation}
   \partial_t u=\frac{1}{2}\Delta u-\tilde{V} u,\qquad u(0)=f,
   \label{regularized problem}
\end{equation}
where  we denote $\tilde{V}(t, x):=V(x-w^H_t)$ in case $V$ enjoys sufficient regularity to admit pointwise evaluations or respectivelty $\tilde{V}(t, \varphi)=\langle V, \varphi(\cdot+w^H_t)\rangle$ for any smooth test function $\varphi$ in case $V$ only enjoys some distributional regularity. We show that by shifting the reference frame of the potential in such a way, we can restore well-posedness to \eqref{regularized problem} and obtain stability results even in the setting of distributional valued $V$. As an application, we establish well-posedness of a shifted parabolic Anderson model
\begin{equation}
   \partial_t u=\frac{1}{2}\Delta u-\tilde{\xi} u,\qquad u(0)=f
   \label{regularized PAM}
\end{equation}
in arbitrary dimension, provided the Hurst parameter $H$ is chosen sufficiently small. \\
\\
Towards this end, we exploit a pathwise regularization phenomenon on the level of the Feynman-Kac formula building mainly on  \cite{harang2020cinfinity}. More concretely, we establish in Lemma \ref{identification} below that for smooth potentials $V^\epsilon$, we may rewrite the Feynman-Kac formula for \eqref{regularized problem} as 
\begin{equation}
    u^\epsilon(t,x)=\mathbb{E}^x\left[f(W_t)\exp{\left(-(\mathcal{I}A^{t, \epsilon})_{t}\right)}\right],
    \label{feynman kac regularized}
\end{equation}
where $\mathcal{I}$ denotes Gubinelli's Sewing operator\footnote{Refer to  \cite{gubi} and Lemma \ref{sewing} in the Appendix.}, $A^{t,\epsilon}$ the germ
\[
A^{t, \epsilon}_{s, r}=(V^\epsilon*L_{s,r})(W_{t-s}),
\]
and $L$ the local time associated with the fractional Brownian motion $w^H$. Due to Young's inequality in Besov spaces, \eqref{feynman kac regularized} might be well defined even for distributional potentials $V$, provided the local time $L$ enjoys sufficient spatial regularity\footnote{This is precisely what \cite[Theorem 17]{harang2020cinfinity}, also cited explicitly below, provides us with.}. We therefore refer to \eqref{feynman kac regularized} as a robustified Feynman-Kac formula. This main observation allows us to subsequently pass by a mollification argument: Given a distributional $V$, we consider first a mollification $V^\epsilon=\rho^\epsilon*V$. For such $V^\epsilon$, we may express the unique solution $u^\epsilon$ to the associated PDE as in \eqref{feynman kac regularized} by Lemma \ref{identification}. Next, in Lemma \ref{convergence of mollifications} we establish that due to the regularizing effect of the local time $L$, $(u^\epsilon)_\epsilon$ will converge in appropriate topologies to
\begin{equation}
\label{limit candidate}
    u(t,x)=\mathbb{E}^x\left[f(W_t)\exp{\left(-(\mathcal{I}A^{t})_{t}\right)}\right],
\end{equation}
that then becomes a candidate for a solution to \eqref{regularized problem}. Notice that the expression $\tilde{V}u$ appearing in the equation is however a priori ill-defined. Yet, exploiting our explicit robustified Feynman-Kac representation \eqref{limit candidate}, we can easily establish higher spatial regularity of $u$. In particular, we demonstrate that provided the initial condition $f$ is chosen sufficiently smooth and $H$ sufficiently small, $u$ enjoys sufficient regularity for the product 
\[
V(\cdot)u(t, \cdot+w^H_t)
\]
to be well defined in the sense of Lemma \ref{multiplication lemma} for any $t\in [0,T]$. Under these more restrictive conditions, we are thus able to identify \eqref{limit candidate} as a weak solution to \eqref{regularized problem}.

\subsection{Formulation of the main result}
\begin{theorem}
\label{main theorem}
For $\eta\geq 0$ and $d\in \mathbb{N}$, let $V\in H^{-\eta}(\R^d)$ and $f\in C^1(\R^d)$. Let $w^H$ be a $d$-dimensional fractional Brownian motion on $(\Omega^H, \mathcal{F}^H, \mathbb{P}^H)$ whose Hurst parameter satisfies $H<\frac{1}{2}(1+\eta+d/2)^{-1}$. Then for any mollification $V^\epsilon=V*\rho^\epsilon$ and $\tilde{V^\epsilon}(t,x):=V^\epsilon(x-w^H_t)$, the sequence $(u^\epsilon)_\epsilon$ of unique solutions to the the problem 
\[
\partial_t u^\epsilon=\frac{1}{2}\Delta u^\epsilon-\tilde{V^\epsilon}u^\epsilon, \qquad u(0)=f
\]
is Cauchy in $C([0,T]\times \R^d)$ equipped with the topology of uniform convergence, $\mathbb{P}^H$-almost surely. Moreover, provided further $\eta\not \in \mathbb{N}$ and $H<\frac{1}{2}(1+\eta+\lceil \eta \rceil+d/2)^{-1}$ and $f\in C^{\lceil \eta \rceil}$, we have for the limit $u$ that $u(t, \cdot)\in C^{\lceil \eta \rceil}$ uniformly in $[0,T]$,  $\mathbb{P}^H$-almost surely. In particular the product $V(\cdot)u(t, \cdot+w^H_t)$ is well defined for any $t\in [0,T]$ in the sense of Lemma \ref{multiplication lemma} and $u$ is a weak solution to 
\begin{equation}
    \partial_t u=\frac{1}{2}\Delta u-\tilde{V}u, \qquad u(0)=f
    \label{shifted problem}
\end{equation}
i.e. for any $\varphi\in C_c^\infty(\R^d)$ and $t\in [0,T]$ we have
\[
\langle u_t-f, \varphi\rangle= \int_0^t \langle  u_s, \frac{1}{2}\Delta \varphi\rangle ds+\int_0^t \langle V(\cdot) u(\cdot+w^H_s), \varphi(\cdot+w^H_s)\rangle ds.
\]
\end{theorem}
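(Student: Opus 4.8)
The plan is to split the proof into three stages matching the three claims of the theorem: (i) the Cauchy property of $(u^\epsilon)_\epsilon$ in $C([0,T]\times\R^d)$, (ii) the spatial regularity $u(t,\cdot)\in C^{\lceil\eta\rceil}$ under the stronger hypotheses, and (iii) the identification of $u$ as a weak solution to \eqref{shifted problem}.

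\textbf{Stage (i): convergence of the mollified solutions.} I would invoke Lemma \ref{identification} to represent each $u^\epsilon$ via the robustified Feynman-Kac formula \eqref{feynman kac regularized} with germ $A^{t,\epsilon}_{s,r}=(V^\epsilon*L_{s,r})(W_{t-s})$. The key input is \cite[Theorem 17]{harang2020cinfinity}, which gives the spatial Besov regularity of the fractional local time $L$; concretely, for $H$ small enough the map $r\mapsto L_{0,r}$ takes values in $B^{\gamma}_{\infty,\infty}$ with $\gamma$ large enough (depending on $1/H$) that $V^\epsilon*L_{s,r}$ converges to $V*L_{s,r}$ in $C^0$ as $\epsilon\to 0$, with the relevant Young/Besov bounds uniform in $\epsilon$. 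Here the condition $H<\tfrac12(1+\eta+d/2)^{-1}$ is exactly what makes Young's inequality in Besov spaces (Lemma \ref{multiplication lemma}, or the stated Young inequality) applicable to pair $V\in H^{-\eta}$ with $L_{s,r}$ and still leave enough time-regularity (an exponent $>1$) for the Sewing Lemma \ref{sewing} to apply to the germ. I would then show $A^{t,\epsilon}\to A^t$ in the space of germs to which Sewing applies, uniformly in $x$ and in the Brownian path up to the relevant moments, so that $(\cI A^{t,\epsilon})_t\to(\cI A^t)_t$; since $\exp$ is Lipschitz on bounded sets and $f\in C^1$ is bounded on the range of $W_t$ after taking $\bE^x$, dominated convergence yields $u^\epsilon(t,x)\to u(t,x)$ with the convergence uniform in $(t,x)$ (the germ bounds being uniform on compacts), hence $(u^\epsilon)_\epsilon$ is Cauchy in $C([0,T]\times\R^d)$, $\bP^H$-a.s.

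\textbf{Stage (ii): spatial regularity of the limit.} Under $\eta\notin\N$, $H<\tfrac12(1+\eta+\lceil\eta\rceil+d/2)^{-1}$ and $f\in C^{\lceil\eta\rceil}$, I would differentiate the representation \eqref{limit candidate} in $x$ up to order $\lceil\eta\rceil$. Spatial derivatives hit either $f(W_t)$ — controlled since $f\in C^{\lceil\eta\rceil}$ — or the exponential, producing factors of $\partial_x(\cI A^t)_t$; one commutes the derivative with the Sewing map (Sewing is linear and commutes with the bounded operation of translating the base point, using that $\partial_x$ of the germ is again an admissible germ) to get $\partial_x(\cI A^t)_t=(\cI\,\partial_x A^t)_t$, and $\partial_x A^{t}_{s,r}=(V*\partial L_{s,r})(W_{t-s})$. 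The sharper smallness condition on $H$ buys $\lceil\eta\rceil$ extra spatial derivatives of the local time via \cite[Theorem 17]{harang2020cinfinity} while still keeping the time-exponent $>1$ needed for Sewing. A Faà di Bruno / product-rule bookkeeping then bounds all derivatives up to order $\lceil\eta\rceil$ uniformly in $t\in[0,T]$ and locally uniformly in $x$, giving $u(t,\cdot)\in C^{\lceil\eta\rceil}$ uniformly in $t$, $\bP^H$-a.s. In particular, since $V\in H^{-\eta}$ and $u(t,\cdot+w^H_t)\in C^{\lceil\eta\rceil}$ with $\lceil\eta\rceil>\eta$, Lemma \ref{multiplication lemma} makes the product $V(\cdot)u(t,\cdot+w^H_t)$ well defined as a distribution, and paired against $\varphi(\cdot+w^H_t)\in C_c^\infty$ it is a genuine number, so the right-hand side of the weak formulation makes sense.

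\textbf{Stage (iii): passing to the limit in the weak formulation.} For each $\epsilon$, $u^\epsilon$ solves its PDE classically, hence weakly: $\langle u^\epsilon_t-f,\varphi\rangle=\int_0^t\langle u^\epsilon_s,\tfrac12\Delta\varphi\rangle\,ds+\int_0^t\langle \tilde V^\epsilon_s u^\epsilon_s,\varphi\rangle\,ds$, and after the change of variables $y\mapsto y+w^H_s$ the last term reads $\int_0^t\langle V^\epsilon(\cdot)u^\epsilon(s,\cdot+w^H_s),\varphi(\cdot+w^H_s)\rangle\,ds$. By Stage (i), $u^\epsilon_t\to u_t$ uniformly, so the left side and the $\Delta\varphi$ term pass to the limit immediately (dominated convergence on the compact $\mathrm{supp}\,\varphi\times[0,t]$). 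The only delicate term is $\int_0^t\langle V^\epsilon(\cdot)u^\epsilon(s,\cdot+w^H_s),\varphi(\cdot+w^H_s)\rangle\,ds$: I would write $V^\epsilon=V*\rho^\epsilon\to V$ in $H^{-\eta}$, note that $u^\epsilon(s,\cdot+w^H_s)\varphi(\cdot+w^H_s)\to u(s,\cdot+w^H_s)\varphi(\cdot+w^H_s)$ in $C^{\lceil\eta\rceil}_c$ uniformly in $s$ (combining Stage (i) for the $C^0$ convergence with uniform $C^{\lceil\eta\rceil}$ bounds from Stage (ii) applied to $u^\epsilon$ as well, plus interpolation), and invoke the continuity of the pairing $H^{-\eta}\times C^{\lceil\eta\rceil}_c\to\R$ from Lemma \ref{multiplication lemma} together with dominated convergence in $s$. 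This yields the claimed identity. \textbf{The main obstacle} is precisely Stage (iii)'s product term — establishing convergence of $u^\epsilon$ in a topology strong enough ($C^{\lceil\eta\rceil}$-type, not merely $C^0$) to survive pairing against the distribution $V$; this is why Stage (ii) must be run uniformly in $\epsilon$ (i.e. the $C^{\lceil\eta\rceil}$ bounds on $u^\epsilon$, not just on the limit $u$, are needed), and it is where the second, more restrictive smallness assumption on $H$ is essential.
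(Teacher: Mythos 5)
Your proposal is correct and follows essentially the same route as the paper: the robustified Feynman--Kac representation via the Sewing Lemma and the local time regularity of \cite{harang2020cinfinity}, convergence of the mollified solutions in $C^0$, then spatial regularity of order $\lceil\eta\rceil$ by differentiating under the sewing and a Fa\`a di Bruno argument, and finally the multiplication lemma to make sense of $\tilde V u$ and pass to the limit in the weak formulation. Your Stage (iii) is in fact spelled out in more detail than the paper's corresponding corollary, and your observation that the $C^{\lceil\eta\rceil}$ convergence must hold for the $u^\epsilon$ uniformly in $t$ (not just for the limit $u$) is exactly what the paper's spatial-regularity lemma provides.
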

\begin{cor}[Regularized PAM]
Consider spatial white noise on the $d$-dimensional torus $\mathbb{T}^d$. Its realizations are known to lie in $H^{-(d/2+\epsilon)}$ for any $\epsilon>0$ almost surely \cite{regularity_white_noise_torus}. Hence, imposing $H<\frac{1}{2}(1+d)^{-1}$, we can apply the first part of our main result Theorem \ref{main theorem}. Demanding further $H<\frac{1}{2}(1+d+\lceil d/2+1/4 \rceil)^{-1}$, we may employ the second part, yielding a weak solution. 
\end{cor}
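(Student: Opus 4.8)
The plan is to read the corollary as a direct specialization of Theorem \ref{main theorem}: all the analytic content has already been established, so the work consists entirely in (a) recording the precise Sobolev regularity of spatial white noise and (b) substituting that exponent into the two abstract thresholds on $H$ and optimizing the resulting bounds over the available spectral parameter. I take as given the cited fact \cite{regularity_white_noise_torus} that $\mathbb{P}$-almost every realization of spatial white noise $\xi$ on $\mathbb{T}^d$ lies in $H^{-(d/2+\epsilon)}(\mathbb{T}^d)$ for every $\epsilon>0$ (while failing to lie in the endpoint space $H^{-d/2}$). This lets me invoke Theorem \ref{main theorem} with potential $V=\xi$ and regularity index $\eta=d/2+\epsilon$, the free parameter $\epsilon>0$ to be chosen as small as convenient.

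For the first assertion I would substitute $\eta=d/2+\epsilon$ into the hypothesis $H<\tfrac12(1+\eta+d/2)^{-1}$ of the first part of Theorem \ref{main theorem}, which becomes $H<\tfrac12(1+d+\epsilon)^{-1}$. The right-hand side increases to $\tfrac12(1+d)^{-1}$ as $\epsilon\downarrow 0$, so any $H$ strictly below $\tfrac12(1+d)^{-1}$ satisfies the inequality for all sufficiently small $\epsilon>0$. Fixing such an $\epsilon$ and keeping $f\in C^1$, the first part of Theorem \ref{main theorem} applies verbatim and yields the $\mathbb{P}^H$-almost sure Cauchy property of $(u^\epsilon)_\epsilon$ in $C([0,T]\times\R^d)$ together with a continuous limit $u$.

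For the second assertion I must additionally secure the hypotheses $\eta\notin\mathbb{N}$, $f\in C^{\lceil\eta\rceil}$ and $H<\tfrac12(1+\eta+\lceil\eta\rceil+d/2)^{-1}$. The only bookkeeping point is the non-integrality constraint: since $d/2$ is either an integer (for $d$ even) or a half-integer (for $d$ odd), any $\epsilon\in(0,1/2)$ places $\eta=d/2+\epsilon$ strictly between consecutive integers, so $\eta\notin\mathbb{N}$ and, moreover, $\lceil d/2+\epsilon\rceil=\lceil d/2+1/4\rceil$ is stable throughout that range regardless of the parity of $d$ (the representative value $\epsilon=1/4$ motivating the notation in the statement). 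Substituting into the threshold gives $H<\tfrac12\bigl(1+d+\epsilon+\lceil d/2+1/4\rceil\bigr)^{-1}$, and letting $\epsilon\downarrow 0$ shows that any $H<\tfrac12(1+d+\lceil d/2+1/4\rceil)^{-1}$ admits an admissible small $\epsilon$. With the inherited regularity $f\in C^{\lceil d/2+1/4\rceil}$, the second part of Theorem \ref{main theorem} then delivers $u(t,\cdot)\in C^{\lceil\eta\rceil}$ uniformly on $[0,T]$, the well-definedness of the product $\xi(\cdot)\,u(t,\cdot+w^H_t)$ in the sense of Lemma \ref{multiplication lemma}, and the weak-solution property.

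The main (and essentially only) obstacle is conceptual rather than technical: one must respect that white noise sits strictly below $H^{-d/2}$, so the endpoint $\eta=d/2$ is never attained and both thresholds are necessarily open. This is exactly why the admissible range for $H$ emerges as a strict inequality obtained in the limit $\epsilon\downarrow 0$, and why a strictly positive offset (conveniently $1/4$) rather than $0$ is forced in the second regime in order to keep $\eta$ away from $\mathbb{N}$ uniformly in the dimension. No estimate beyond Theorem \ref{main theorem} is required.
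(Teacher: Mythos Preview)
Your proposal is correct and follows exactly the route the paper intends: the corollary has no separate proof in the paper because it is meant as an immediate specialization of Theorem \ref{main theorem}, and you have carefully carried out the substitution $\eta=d/2+\epsilon$ and the limit $\epsilon\downarrow 0$, including the bookkeeping that ensures $\eta\notin\mathbb{N}$ and that $\lceil d/2+\epsilon\rceil=\lceil d/2+1/4\rceil$ for $\epsilon\in(0,1/2)$ uniformly in the parity of $d$. There is nothing to add.
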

\begin{rem}
Note that due to the robustness of our approach, several canonical extensions to the above statements follow readily mutatis mutandis: Instead of considering the Laplacian, more general non-degenerate diffusion operators can be considered in \eqref{shifted problem}. Moreover, instead of shifting the potential $V$ along paths of fractional Brownian motion, shifts along any path that admits a sufficiently regular local time are conceivable. 
\end{rem}
\subsection{Short overview of existing literature}
The idea of employing a robustified Feynman-Kac formula in the study of heat equations with some form of multiplicative noise can be traced back to at least \cite{FKconjecture}, \cite{FK1}, \cite{FK2} for various types of space-time fractional Brownian motions.  \cite{hu_le_nonlinear} combines these considerations with non-linear Young theory similar in spirit to the setting presented in this article. Remark however that our qualitatively different approach of considering random translations of the potential allows us to treat considerably more singular potentials. Furthermore, robustifications of the Feynman-Kac formula have been employed in the study of rough stochastic PDEs for example in \cite[Chapter 12]{frizhairer} or  \cite{frizdiehlstannat}. Regularization by additive noise for the multiplicative stochastic heat equation was recently established by \cite{catellier2021pathwise} building upon ideas on pathwise regularization by noise in the spirit of \cite{gubicat}, \cite{galeati2020noiseless}, \cite{harang2020cinfinity}. Let us mention that this approach to regularization by noise has recently seen numerous interesting applications for example to interacting particle systems \cite{harang2020pathwise}, distribution dependent SDEs \cite{galeati2021distribution1}, \cite{galeati2021distribution2} and multiplicative SDEs \cite{galeati2020regularization}, \cite{bechtold}. 


\subsection{Notation}
We employ the standard notation necessary to formulate and apply the Sewing Lemma for which we refer to Appendix \ref{sewing concepts}. Let $ \mathcal{S}'$ denote the space of tempered distributions. For $\eta\in \R$, we denote by $H^\eta$ the inhomogeneous Bessel potential space of order $\eta$, i.e.
\[
H^\eta:=\left\{f\in \mathcal{S}'|\ \norm{f}_{H^\eta}=\norm{(1+|(\cdot)|)^\eta \hat{f}}_{L^2}<\infty \right\}.
\]
Moreover, for $\alpha>0$ and $\alpha\not\in \mathbb{N}$ we denote by $\mathcal{C}^\alpha$ the H\" older space
\[
\mathcal{C}^\alpha:=\left\{f\in \mathcal{S}'|\ \norm{f}_{\mathcal{C}^\alpha}= \norm{f}_{C^{\lfloor \alpha \rfloor}} +\sup_{x\neq y} \frac{|(D^kf)(x)-(D^kf)(y)|}{|x-y|^{\alpha-\lfloor \alpha \rfloor}}<\infty \right\},
\]
where for $n\in \mathbb{N}_0$
\begin{equation}
    \label{norm}
 \norm{f}_{C^n}=\sum_{k=0}^{n}\norm{D^kf}_\infty.
\end{equation}
We denote by $C^n$ the space of $n$-times continuously differentiable functions such that \eqref{norm} is finite.
Let us remark that the first two spaces above are related to more general Besov spaces in the sense that $H^\alpha=B^\alpha_{2,2}$ for any $\alpha\in \R$ and $\mathcal{C}^\alpha= B^\alpha_{\infty, \infty}$ for $\alpha>0$ and $\alpha\not \in \mathbb{N}$. In particular, note that by Young's inequality in Besov spaces \cite{kuhn2021convolution}, we have 
\begin{equation}
    \norm{f*g}_{\mathcal{C}^{\alpha-\eta}}\lesssim \norm{f}_{H^\alpha}\norm{g}_{H^{-\eta}}
    \label{young in besov}
\end{equation}
for any $\alpha-\eta$ such that $\alpha-\eta>0$ and $\alpha-\beta\not \in \mathbb{N}$. Let us also recall the multiplication theorem for Besov spaces (see e.g. {\cite[Corollary 2.1.35]{Martin2018Refinements}}, {\cite[Theorem 19.7]{willem_lec_notes}}) adapted to our setting:
\begin{lemma}
\label{multiplication lemma}
Let $\alpha>0$ such that $\alpha-\eta>0$. Then for any $\epsilon>0$  and $u, v\in \mathcal{S}'$ we have
\[
\norm{u\cdot V}_{H^{-\eta-\epsilon}}\lesssim \norm{u}_{\mathcal{C}^\alpha}\norm{V}_{H^{-\eta}},
\]
i.e. the multiplication operator extends to a continuous bilinear map $\cdot : \mathcal{C}^\alpha\times H^{-\eta}\to H^{-\eta-\epsilon} $
\end{lemma}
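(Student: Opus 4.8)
The plan is to establish the estimate by the classical Littlewood--Paley / Bony paraproduct decomposition; we remark at the outset that the loss of $\epsilon$ derivatives is pure slack (the product in fact lands in $H^{-\eta}$), and is recorded only so as to match the cleanest off-the-shelf formulation. Let $(\Delta_j)_{j\geq-1}$ and $(S_j)$ denote the usual inhomogeneous dyadic blocks. Recall that $u\in\mathcal{C}^\alpha=B^\alpha_{\infty,\infty}$ is equivalent to $\sup_{j}2^{j\alpha}\norm{\Delta_ju}_\infty\lesssim\norm{u}_{\mathcal{C}^\alpha}$, and $V\in H^{-\eta}=B^{-\eta}_{2,2}$ to $(2^{-j\eta}\norm{\Delta_jV}_{L^2})_j\in\ell^2$ with $\ell^2$-norm $\sim\norm{V}_{H^{-\eta}}$; since $\alpha>0$ we also have $u\in L^\infty$ with $\norm{S_{j-1}u}_\infty\lesssim\norm{u}_{\mathcal{C}^\alpha}$. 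For $u,V$ smooth we decompose $u\cdot V=T_uV+T_Vu+R(u,V)$ with $T_uV=\sum_jS_{j-1}u\,\Delta_jV$, $T_Vu=\sum_jS_{j-1}V\,\Delta_ju$ and $R(u,V)=\sum_{|i-j|\leq1}\Delta_iu\,\Delta_jV$; for general $u,V\in\mathcal{S}'$ with the stated norms finite one runs the estimates below for a mollification, checks that each series converges in $H^{-\eta-\epsilon}$, and \emph{defines} $u\cdot V$ as the limit (which coincides with the genuine product on smooth inputs). I will not dwell on this routine approximation step.

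For the paraproducts $T_uV$ and $T_Vu$ every summand has Fourier transform supported in a fixed dyadic annulus $\{|\xi|\sim2^j\}$, so it suffices to bound the $L^2$-norms of the summands. By Hölder, $\norm{S_{j-1}u\,\Delta_jV}_{L^2}\leq\norm{S_{j-1}u}_\infty\norm{\Delta_jV}_{L^2}\lesssim\norm{u}_{\mathcal{C}^\alpha}\,2^{j\eta}c_j\norm{V}_{H^{-\eta}}$ with $(c_j)\in\ell^2$, whence $T_uV\in B^{-\eta}_{2,2}=H^{-\eta}$ with the right bound --- this is the term that dictates the target regularity. For $T_Vu$ one estimates $\norm{S_{j-1}V}_{L^2}$: when $\eta=0$ this is $\leq\norm{V}_{L^2}$, and when $\eta>0$ one has $\norm{S_{j-1}V}_{L^2}\leq\sum_{k\leq j-2}\norm{\Delta_kV}_{L^2}\lesssim\sum_{k\leq j-2}2^{k\eta}c_k\lesssim2^{j\eta}\norm{V}_{H^{-\eta}}$ by Cauchy--Schwarz, the geometric series converging precisely because $\eta>0$; combined with $\norm{\Delta_ju}_\infty\lesssim2^{-j\alpha}\norm{u}_{\mathcal{C}^\alpha}$ this gives $T_Vu\in B^{\alpha-\eta}_{2,2}\hookrightarrow H^{-\eta}\hookrightarrow H^{-\eta-\epsilon}$, using $\alpha>0$.

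The only genuinely delicate term, and the sole place where the hypothesis $\alpha-\eta>0$ enters, is the resonant part $R(u,V)$: here $\Delta_iu\,\Delta_jV$ (with $|i-j|\leq1$) has Fourier support merely in a \emph{ball} of radius $\sim2^j$, so the frequency-$q$ block of $R(u,V)$ receives contributions from all scales $j\gtrsim q$. Using $\norm{\Delta_iu\,\Delta_jV}_{L^2}\leq\norm{\Delta_iu}_\infty\norm{\Delta_jV}_{L^2}\lesssim2^{j(\eta-\alpha)}c_j\norm{u}_{\mathcal{C}^\alpha}\norm{V}_{H^{-\eta}}$ one obtains $\norm{\Delta_qR(u,V)}_{L^2}\lesssim\sum_{j\gtrsim q}2^{j(\eta-\alpha)}c_j\norm{u}_{\mathcal{C}^\alpha}\norm{V}_{H^{-\eta}}$, and since $\eta-\alpha<0$ a Cauchy--Schwarz over $j\geq q$ bounds this by $\lesssim2^{q(\eta-\alpha)}\norm{u}_{\mathcal{C}^\alpha}\norm{V}_{H^{-\eta}}$, i.e. $R(u,V)\in B^{\alpha-\eta}_{2,\infty}$. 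As $\alpha-\eta>0$ we may embed $B^{\alpha-\eta}_{2,\infty}\hookrightarrow H^{\alpha-\eta-\delta}\hookrightarrow H^{-\eta-\epsilon}$ for $\delta$ small, and summing the three contributions yields $\norm{u\cdot V}_{H^{-\eta-\epsilon}}\lesssim\norm{u}_{\mathcal{C}^\alpha}\norm{V}_{H^{-\eta}}$; continuity of the bilinear extension is then immediate from this bound and the approximation above. The main obstacle is thus precisely this resonant estimate --- the paraproducts are ``triangular'' and unconditional, whereas $R$ forces one to sum a geometric series over the high frequencies, which converges only under $\alpha-\eta>0$. (For a shortcut one may instead quote the cited Besov multiplication theorem with data $(s_1,p_1,q_1)=(\alpha,\infty,\infty)$ and $(s_2,p_2,q_2)=(-\eta,2,2)$, whose admissibility hypothesis is exactly $s_1+s_2=\alpha-\eta>0$ and whose output indices are $1/p=1/p_1+1/p_2=1/2$ and $s=\min(s_1,s_2,s_1+s_2)=-\eta$, followed by the trivial embedding $H^{-\eta}\hookrightarrow H^{-\eta-\epsilon}$.)
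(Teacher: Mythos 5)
Your proof is correct, but it is worth noting that the paper does not actually prove this lemma at all: it simply quotes the Besov multiplication theorem from the cited references, exactly as in the parenthetical shortcut at the end of your write-up. What you have written out is, in effect, the standard proof of that cited theorem, and it goes through: the paraproduct $T_uV$ is controlled by $\norm{u}_{L^\infty}$ and lands in $B^{-\eta}_{2,2}$, the paraproduct $T_Vu$ gains $\alpha$ derivatives over the regularity of $V$, and the resonant term is where $\alpha-\eta>0$ is genuinely needed to sum the high-frequency tail. Your remark that the $\epsilon$-loss is pure slack is also correct. One small imprecision: with the crude bound $\norm{S_{j-1}V}_{L^2}\lesssim 2^{j\eta}\norm{V}_{H^{-\eta}}$ you only get $T_Vu\in B^{\alpha-\eta}_{2,\infty}$, not $B^{\alpha-\eta}_{2,2}$ as written; this is harmless since $B^{\alpha-\eta}_{2,\infty}\hookrightarrow B^{-\eta}_{2,2}$ holds by the strict inequality $\alpha-\eta>-\eta$, which is the same embedding you already invoke for the resonant term. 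The trade-off between the two routes is the usual one: citing the multiplication theorem is shorter and keeps the paper self-contained only modulo the references, while your explicit decomposition makes visible exactly which hypothesis ($\alpha>0$ for the paraproducts and the $L^\infty$ control, $\alpha-\eta>0$ for the resonance) is responsible for which part of the estimate.
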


\section{Proof of Theorem \ref{main theorem}}
For the readers convenience we begin by stating the following result on the regularity of local times associated with fractional Brownian motion that we will use throughout. We refer to the Appendix \ref{local time concepts} the basic definitions of occupation measures, local times and the occupation times formula. 
\begin{lemma}[{\cite[Theorem 17]{harang2020cinfinity}} ]
\label{regularity of local time}
Let $w^H$ be a $d$-dimensional fractional Brownian motion of Hurst parameter $H<1/d$ on $(\Omega^H, \mathcal{F}^H, \mathbb{P}^H)$. Then there exists a null set $\mathcal{N}$ such that for all $\omega\in \mathcal{N}^c$, the path $w^H(\omega)$ has a local time $L(\omega)$ and for $\lambda<\frac{1}{2H}-\frac{d}{2}$ and $\gamma\in [0, 1-(\lambda +\frac{d}{2})H)$ we have
\begin{equation}
    \norm{L_{s,t}(\omega)}_{H^\lambda}\leq C_T(\omega) |t-s|^\gamma.
    \label{local time is Holder}
\end{equation}
for any $s, t\in [0,T]$, where $L_{s,t}=L_t-L_s$. 
\end{lemma}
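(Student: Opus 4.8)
The content of the statement is twofold: that $w^H$ almost surely possesses a local time $L$ whose spatial profile lies in the Bessel potential space $H^\lambda$, and that the map $t\mapsto L_t$ is $\gamma$-H\"older with values in $H^\lambda$. The plan is to build $L$ from its spatial Fourier transform and then run a Kolmogorov continuity argument for the $H^\lambda$-valued process $t\mapsto L_t$. On the Fourier side the occupation times formula dictates that the increment of the local time must satisfy
\[
\widehat{L_{s,t}}(\xi)=\int_s^t e^{i\xi\cdot w^H_r}\,dr,
\]
so I would take the inverse Fourier transform of the right-hand side as the candidate $L_{s,t}$ and verify a posteriori, from the moment bounds below, that it defines an element of $H^\lambda$ and hence a genuine density of the occupation measure.

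The heart of the proof is a second-moment computation. By the definition of the $H^\lambda$-norm, Tonelli, and the fact that $w^H_r-w^H_{r'}$ is a centred Gaussian vector with independent coordinates each of variance $|r-r'|^{2H}$, one gets
\[
\mathbb{E}\norm{L_{s,t}}_{H^\lambda}^2=\int_s^t\!\!\int_s^t\Big(\int_{\R^d}(1+|\xi|^2)^\lambda e^{-\frac12|\xi|^2|r-r'|^{2H}}\,d\xi\Big)\,dr\,dr'.
\]
The inner integral is handled by the rescaling $\xi\mapsto|r-r'|^{-H}\xi$, which produces a factor $|r-r'|^{-2H(\lambda+d/2)}$ multiplying a finite Gaussian integral (finite precisely because $\lambda>-d/2$). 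The surviving double time integral $\int_s^t\int_s^t|r-r'|^{-2H(\lambda+d/2)}\,dr\,dr'$ converges if and only if $2H(\lambda+d/2)<1$, which is exactly the hypothesis $\lambda<\frac1{2H}-\frac d2$, and by homogeneity it equals a constant times $|t-s|^{2-2H(\lambda+d/2)}$. Setting $\gamma_0:=1-(\lambda+d/2)H$, this reads $\mathbb{E}\norm{L_{s,t}}_{H^\lambda}^2\lesssim|t-s|^{2\gamma_0}$; taking $s=0$ shows $L_t\in H^\lambda$ almost surely. (The condition $H<1/d$ in the hypothesis is precisely what guarantees that the admissible range of $\lambda$ contains nonnegative values.)

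To promote this $L^2$ estimate to an almost sure pathwise bound I would control all even moments. Raising to the $2n$-th power, using Tonelli and the joint characteristic function of the Gaussian vector $(w^H_{r_k}-w^H_{r_k'})_{k\le n}$ leads to
\[
\mathbb{E}\norm{L_{s,t}}_{H^\lambda}^{2n}=\int_{[s,t]^{2n}}\int_{(\R^d)^n}\Big(\prod_{k=1}^n(1+|\xi_k|^2)^\lambda\Big)e^{-\frac12\mathrm{Var}\left(\sum_{k=1}^n\xi_k\cdot(w^H_{r_k}-w^H_{r_k'})\right)}\,d\xi_1\cdots d\xi_n\,dr_1\,dr_1'\cdots dr_n\,dr_n'.
\]
The decisive tool here is the strong local nondeterminism of fractional Brownian motion, which bounds the conditional variances of $w^H$ from below by the local time scales and thereby decouples the $\xi$-integrals after ordering the $2n$ time variables. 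This is the step I expect to be the main obstacle: the pairwise covariance alone does not factorise the integral, and only local nondeterminism yields a product structure that reproduces one factor $|t-s|^{2\gamma_0}$ per pair and keeps the combinatorial constant under control, giving $\mathbb{E}\norm{L_{s,t}}_{H^\lambda}^{2n}\lesssim_n|t-s|^{2n\gamma_0}$.

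With $\mathbb{E}\norm{L_{s,t}}_{H^\lambda}^{p}\lesssim_p|t-s|^{p\gamma_0}$ available for every even integer $p$, the Kolmogorov--Chentsov continuity theorem applied to the $H^\lambda$-valued process $t\mapsto L_t$ furnishes a modification that is $\mathbb{P}^H$-almost surely $\gamma$-H\"older with values in $H^\lambda$ for every $\gamma<\gamma_0-1/p$, with a random H\"older constant $C_T(\omega)$. Choosing $p$ arbitrarily large exhausts the whole range $\gamma\in[0,1-(\lambda+d/2)H)$. Finally, intersecting the exceptional sets over a countable dense family of parameters $(\lambda,\gamma)$ and using the embedding $H^\lambda\hookrightarrow H^{\lambda'}$ for $\lambda'\le\lambda$ produces a single null set $\mathcal{N}$ outside of which \eqref{local time is Holder} holds simultaneously for all admissible $\lambda$ and $\gamma$.
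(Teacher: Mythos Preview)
The paper does not prove this lemma at all; it merely quotes it verbatim from \cite[Theorem~17]{harang2020cinfinity} and uses it as a black box. Your sketch is correct and is essentially the argument given in that reference: Fourier representation of the local time increment, Gaussian moment computation with the key input being the strong local nondeterminism of fractional Brownian motion to decouple the higher moments, followed by Kolmogorov--Chentsov in $H^\lambda$. So there is nothing to compare against in the present paper, but your approach coincides with that of the cited source.
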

Throughout the remainder of the paper and for $H<1/d$ satisfying the conditions demanded in the statements below, we shall fix a realization of fractional Brownian motion $w^H$ on $(\Omega^H, \mathcal{F}^H, \mathbb{P}^H)$ that admits a local time $L$ and for which Lemma \ref{regularity of local time} can be applied in the corresponding regularity regime.

\subsection{Solutions to the mollified equation converge}

Let $V^\epsilon$ be a mollification of some given $V\in \mathcal{S}'$. Then for 
\[
\tilde{V^\epsilon}(t,x):=V^\epsilon(x-w^H_t)
\]
we know that the unique solution to the problem
\begin{equation}
       \partial_t u^\epsilon=\frac{1}{2}\Delta u^\epsilon-(\tilde{V^\epsilon})u^\epsilon, \qquad u^\epsilon(0)=f,
       \label{mollified prob}
\end{equation}
is given by 
\begin{align*}
    u^\epsilon(t,x)
    &=\mathbb{E}^x\left[ f(W_t)\exp{\left(-\int_0^t V^\epsilon(W_{t-s}-w^H_s)ds \right)}\right],
\end{align*}
where $(W_t)_t$ is a standard Brownian motion in $\R^d$ on a stochastic basis $(\Omega, \mathcal{F}, \mathbb{P}, (\mathcal{F}_t)_t)$ and $\mathbb{E}^x[(\cdot)]$ denotes the expectation conditional on the Brownian motion starting in $x\in \R^d$. We first establish that we may replace the above Lebesgue integral in time by an appropriate sewing that is capable of leveraging the regularizing effect due to the highly oscillating fractional Brownian motion. Towards this end, we exploit the Sewing Lemma \ref{sewing}.

\begin{lemma}[Identification of Riemann integral as Sewing]
\label{identification}
For $\delta>0$, let $V\in H^{(1\vee d/2)+\delta}(\R^d)$. Let $W$ be a standard Brownian motion on $\R^d$ on a stochastic basis $(\Omega, \mathcal{F}, \mathbb{P}, (\mathcal{F}_t)_t)$. Then for  almost every $\omega\in \Omega$ the germ
\[
A^t_{s,r}:=(V*L_{s,r})(W_{t-s}(\omega))=\int_s^rV(W_{t-s}(\omega)-w^H_v)dv
\]
admits a Sewing $(\mathcal{I}A^t)$ on $[0,t]$ and moreover  for any $t\in [0,T]$ we have
\[
\int_0^t V(W_{t-s}-w^H_s)ds=(\mathcal{I} A^t)_{t}
\]

\end{lemma}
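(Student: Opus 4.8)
The plan is to verify the hypotheses of the Sewing Lemma for the germ $A^t$ and then identify the resulting sewing with the Riemann integral using a standard uniqueness argument. First I would observe that $A^t$ is genuinely additive along the obvious decomposition is false — rather, write $A^t_{s,r} = \int_s^r V(W_{t-s}(\omega) - w^H_v)\,dv$, and note the subtlety: the base point of the Brownian path, $W_{t-s}$, varies with $s$, so $A^t$ is \emph{not} additive and the sewing is nontrivial. The natural additive candidate is of course $\Gamma^t_s := \int_0^s V(W_{t-v}(\omega) - w^H_v)\,dv$, which satisfies $\Gamma^t_r - \Gamma^t_s = \int_s^r V(W_{t-v}(\omega)-w^H_v)\,dv$. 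So the heart of the matter is to show (i) that $A^t$ admits \emph{a} sewing, and (ii) that the increment $\delta\Gamma^t_{s,r} = \Gamma^t_r - \Gamma^t_s$ differs from $A^t_{s,r}$ by something of order $o(|r-s|)$, whence by the uniqueness part of the Sewing Lemma the sewing of $A^t$ equals $\Gamma^t$, evaluated at endpoints $0$ and $t$ gives $\int_0^t V(W_{t-s}-w^H_s)\,ds$.

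For step (i) I would estimate $\|\delta A^t_{s,m,r}\| = |A^t_{s,r} - A^t_{s,m} - A^t_{m,r}|$. Using the convolution representation and the fact that $V \in H^{(1\vee d/2)+\delta} \hookrightarrow \mathcal{C}^{1+\delta'}$ for some $\delta'>0$ (Sobolev embedding, since $(1\vee d/2)+\delta - d/2 > $ something positive — one gets Lipschitz-type control on $V*L_{s,r}$ via \eqref{young in besov}), we have
\[
\delta A^t_{s,m,r} = (V*L_{s,r})(W_{t-s}) - (V*L_{s,m})(W_{t-s}) - (V*L_{m,r})(W_{t-m}),
\]
and since $L_{s,r} = L_{s,m} + L_{m,r}$ this collapses to $(V*L_{m,r})(W_{t-s}) - (V*L_{m,r})(W_{t-m})$. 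Now bound this by $\|V*L_{m,r}\|_{\mathcal{C}^{1}} \, |W_{t-s} - W_{t-m}|$; by \eqref{young in besov} applied with the regularity of $V$, $\|V*L_{m,r}\|_{\mathcal{C}^1} \lesssim \|V\|_{H^{(1\vee d/2)+\delta}}\|L_{m,r}\|_{H^\lambda}$ for a suitable $\lambda$, which Lemma \ref{regularity of local time} controls by $C_T(\omega)|r-m|^\gamma \leq C_T(\omega)|r-s|^\gamma$ with $\gamma$ close to $1$. Finally $|W_{t-s}-W_{t-m}| \lesssim |s-m|^{1/2-\kappa} \leq |r-s|^{1/2-\kappa}$ almost surely by Brownian Hölder continuity. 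This yields $\|\delta A^t_{s,m,r}\| \lesssim |r-s|^{\gamma + 1/2 - \kappa}$, and choosing $H$ (hence $\gamma$) so that $\gamma + 1/2 - \kappa > 1$ — which is exactly the kind of smallness-of-$H$ condition the paper trades in — verifies the Sewing Lemma hypothesis with exponent $>1$. (I should also check the continuity/measurability of $(s,r)\mapsto A^t_{s,r}$, which is routine given the spatial regularity of $V*L$.)

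For step (ii), the same computation gives $\delta\Gamma^t_{s,r} - A^t_{s,r} = \int_s^r \big(V(W_{t-v}(\omega)-w^H_v) - V(W_{t-s}(\omega)-w^H_v)\big)\,dv$, bounded by $|r-s|\cdot \|V\|_{C^1}\sup_{v\in[s,r]}|W_{t-v}-W_{t-s}| \lesssim |r-s|^{3/2-\kappa} = o(|r-s|)$ — here I use that the mollified/regular $V$ is itself in $C^1$ with bounded derivatives so pointwise Lipschitz control is available directly, no convolution with local time needed. Hence $\Gamma^t$ is an additive functional whose increments approximate the germ $A^t$ to order $o(|r-s|)$, so by the uniqueness clause of the Sewing Lemma, $(\mathcal{I}A^t)_{s,r} = \delta\Gamma^t_{s,r}$; taking $s=0, r=t$ gives the claim. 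I expect step (i) — specifically, balancing the local-time Hölder exponent $\gamma$ (which degrades as $H$ grows) against the loss of $1/2$ from Brownian continuity to get a total exponent strictly above $1$ — to be the only real obstacle; everything else is bookkeeping with Sobolev embeddings and \eqref{young in besov}.
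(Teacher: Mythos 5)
Your proposal follows essentially the same route as the paper: exploit additivity of the local time increments to collapse $\delta A^t_{s,m,r}$ to $(V*L_{m,r})(W_{t-s})-(V*L_{m,r})(W_{t-m})$, bound this via Young's inequality \eqref{young in besov} together with the local time regularity of Lemma \ref{regularity of local time} and the H\"older continuity of $W$, and then identify $\mathcal{I}A^t$ with the Riemann integral by comparing with the trivially additive germ $\Gamma^t_r-\Gamma^t_s$ and invoking uniqueness. Two points in your execution need correcting, though. First, no smallness-of-$H$ condition is needed (and none appears in the statement): with $\lambda=0$ one has $\norm{L_{m,r}}_{L^2}\lesssim |r-m|^{1/2+\epsilon}$ for some $\epsilon>0$ under the standing assumption $H<1/d$ alone, since $1-(d/2)H>1/2$; combined with $|W_{t-s}-W_{t-m}|\lesssim|m-s|^{1/2-\kappa}$ for $\kappa<\epsilon$ this already gives total exponent $1+\epsilon-\kappa>1$. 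You do not need $\gamma$ ``close to $1$'', and the paper explicitly emphasizes in the remark following the lemma that the only constraint here is $H<1/d$ --- this is important for the overall architecture, since the regularization from the local time is deliberately not used at this stage. Second, in step (ii) you assert that $V$ is $C^1$ with bounded derivatives; this does not follow from $V\in H^{(1\vee d/2)+\delta}$ for $d\geq 2$, where the Sobolev embedding only yields $V\in\mathcal{C}^{\delta}$. The argument still closes, but with the H\"older bound $|V(x)-V(y)|\lesssim|x-y|^{\delta}$ in place of the Lipschitz one, giving $|\Gamma^t_r-\Gamma^t_s-A^t_{s,r}|\lesssim|r-s|^{1+\delta(1/2-\kappa)}=o(|r-s|)$, which is exactly how the paper proceeds.
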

\begin{proof}
Remark first that by Lemma \ref{regularity of local time} we have for some $\epsilon>0$ and any $(s,t)\in \Delta_2([0,T])$:
\[
\norm{L_{s,t}}_{L^2}\leq C_T|t-s|^{1/2+\epsilon}.
\]
Moreover, by Young's inequality in Besov spaces \eqref{young in besov}, we  have that 
\[
\norm{V*L_{s,t}}_{\mathcal{C}^{1+\delta}}\lesssim \norm{V}_{H^{1+\delta}}\norm{L_{s,t}}_{L^2}
\]
Moreover, we have for almost every $\omega\in \Omega$ that $W\in C^{1/2-\epsilon/2}$. We therefore obtain for $(s,u,r,t)\in \Delta_4([0,T])$ that
\begin{align*}
    |(\delta A^t)_{s,u,r}|&=|V*L_{u,r}(W_{t-s})-V*L_{u,r}(W_{t-u})|\\
    &\lesssim \norm{V}_{H^{1+\delta}}\norm{L_{s,t}}_{L^2} |r-u|^{1/2+\epsilon}|u-s|^{1/2-\epsilon/2}.
\end{align*}
We conclude that $A^t$ does indeed admit a Sewing on $[0,t]$ and for $(s,u,t)\in \Delta_3([0,T])$ we have
\[
|A^t_{s,u}-(\mathcal{I} A^t)_{s,u}|=O(|u-s|^{1+\epsilon/2}).
\]
Next, observe that the germ
\[
\tilde{A}^t_{s,u}:=\int_s^uV(W_{t-v}-w^H_v)dv
\]
trivially admits a Sewing, as $\delta \tilde{A}^t=0$ wherefore we have $(\mathcal{I}\tilde{A}^t)=\tilde{A}^t$. Moreover note that because of $V\in H^{d/2+\delta}\hookrightarrow \mathcal{C}^\delta$ we have for $(s,u,t)\in \Delta_3$
\begin{align*}
   \left|\tilde{A}^t_{s,u}-A^t_{s,u} \right|\lesssim \int_s^u|W_{t-s}-W_{t-v}|^\delta dv\lesssim |u-s|^{1+\delta (1-\epsilon/2)}
\end{align*}
allowing to conclude
\begin{align*}
    \left| \int_s^uV(W_{t-v}-w^H_v)dv-(\mathcal{I} A^t)_{s, u}\right|\leq |\tilde{A}^t_{s,u}-A^t_{s,u}|+|A^t_{s,u}-(\mathcal{I} A^t)_{s,u}|\lesssim |u-s|^{1+\delta (1-\epsilon/2)}
\end{align*}

Hence the function
\[
s\in [0,t]\to \int_0^s V(W_{t-r}-w^H_r)dr-(\mathcal{I} A^t)_s
\]
is constant. Since it moreover starts in zero, this establishes the claim. 

\end{proof}
\begin{rem}
Remark that in the above statement, we did not exploit any regularization from the local time, but instead demanded regularity of the potential $V$. As a consequence, the only constraint on the Hurst parameter at this stage is $H<1/d$, which simply assures the existence of a local time. In the following, we will impose further restrictions on the Hurst parameter, allowing to pass to less regular potentials $V$.  
\end{rem}

By the previous Lemma \ref{identification}, we have that indeed
\begin{align*}
    u^\epsilon(t,x)&=\mathbb{E}^x\left[ f(W_t)\exp{\left(-\int_0^t V^\epsilon(W_{t-s}-w^H_s)ds \right)}\right]\\
    &=\mathbb{E}^x\left[ f(W_t)\exp{\left(-(\mathcal{I} A^{t,\epsilon})_{t} \right)}\right],
\end{align*}
where
\[
A^{t,\epsilon}_{s,u}:=(V^\epsilon*L_{s,u})(W_{t-s}).
\]
In the next Lemma we address the question: Under which condition on $V$ and $H$ is it possible to pass to a limit $\epsilon\to 0$?

\begin{lemma}[Convergence of mollifications]
For $\eta\geq 0$, $d\in \mathbb{N}$, let $f\in C^1(\R^d)$, $V\in H^{-\eta}(\R^d)$ and  $H<\frac{1}{2}(1+\eta+d/2)^{-1}$. Let $u^\epsilon$ be the unique solution to the mollified problem \eqref{mollified prob} and set
\begin{equation}
    u(t,x):=\mathbb{E}^x\left[ f(W_t)\exp{\left(-(\mathcal{I} A^t)_{t} \right)}\right]
    \label{limit u}
\end{equation}
where 
\[
A^t_{s,u}:=(V*L_{s,u})(W_{t-s}).
\]
Then $u^\epsilon$ converges uniformly to $u$ on $[0,T]\times \R^d$. 
\label{convergence of mollifications}
\end{lemma}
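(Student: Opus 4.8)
The plan is to reduce everything to a uniform estimate on the difference of sewings $(\mathcal{I}A^{t,\epsilon})_t - (\mathcal{I}A^t)_t$ and then invoke dominated convergence inside $\mathbb{E}^x$. First I would record the key regularity input: since $V\in H^{-\eta}$, Lemma \ref{regularity of local time} provides, for a suitable choice of $\lambda$ with $\lambda > \eta$ (which is possible precisely because $H<\frac12(1+\eta+d/2)^{-1}$ forces $\frac{1}{2H}-\frac d2 > 1+\eta > \eta$), a H\"older-in-time bound $\norm{L_{s,u}}_{H^\lambda}\leq C_T|u-s|^\gamma$ with $\gamma$ that can be taken strictly larger than $1/2$; indeed $\gamma$ can be pushed up to $1-(\lambda+\frac d2)H$, and the constraint on $H$ is exactly what makes $1-(\lambda+\frac d2)H>1/2$ feasible together with $\lambda>\eta$. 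By Young's inequality in Besov spaces \eqref{young in besov}, $\norm{V*L_{s,u}}_{\mathcal{C}^{\lambda-\eta}}\lesssim \norm{V}_{H^{-\eta}}\norm{L_{s,u}}_{H^\lambda}$ with $\lambda-\eta>0$, and the same holds for $V^\epsilon$ with a constant uniform in $\epsilon$ since $\norm{V^\epsilon}_{H^{-\eta}}\leq \norm{V}_{H^{-\eta}}$. In particular each germ $A^{t,\epsilon}$ and $A^t$ is bounded (taking $\mathcal{C}^0$-control) uniformly in $\epsilon$, $\omega$, $s\leq u\leq t\leq T$, and $x$: $|A^{t,\epsilon}_{s,u}|\lesssim C_T|u-s|^\gamma$.

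Next I would run the Sewing Lemma on $A^{t,\epsilon}$ and $A^t$ exactly as in the proof of Lemma \ref{identification}, but now using the spatial H\"older regularity of the convolved object together with the $1/2-\epsilon'$ H\"older regularity of Brownian paths: for $(s,u,r,t)\in\Delta_4$,
\[
|(\delta A^{t,\epsilon})_{s,u,r}| = |(V^\epsilon*L_{u,r})(W_{t-s}) - (V^\epsilon*L_{u,r})(W_{t-u})| \lesssim \norm{V^\epsilon*L_{u,r}}_{\mathcal{C}^{\lambda-\eta}}\,|W_{t-s}-W_{t-u}|^{(\lambda-\eta)\wedge 1},
\]
which is $\lesssim C_T|r-u|^\gamma |u-s|^{((\lambda-\eta)\wedge 1)(1/2-\epsilon')}$. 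The sum of the two exponents in $|u-s|$, namely $\gamma + ((\lambda-\eta)\wedge 1)(1/2-\epsilon')$, exceeds $1$ provided $H$ is small enough — and again this is governed by $H<\frac12(1+\eta+d/2)^{-1}$, possibly after noting that if $\lambda-\eta<1$ one needs $\gamma$ a bit above $1/2$, which the slack in Lemma \ref{regularity of local time} supplies. So the Sewing Lemma applies with a bound $|(\mathcal{I}A^{t,\epsilon})_{s,u}-A^{t,\epsilon}_{s,u}|\lesssim C_T|u-s|^{1+\kappa}$ for some $\kappa>0$, with the implicit constant uniform in $\epsilon$, $x$, $t$, $\omega$ (since it only involves $C_T(\omega)$ and $\norm{V}_{H^{-\eta}}$).

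The heart of the matter is the convergence $A^{t,\epsilon}\to A^t$ and its quantitative control. Writing $V^\epsilon - V = (\rho^\epsilon*V) - V$, mollification gives $\norm{V^\epsilon - V}_{H^{-\eta-\sigma}}\to 0$ for any $\sigma>0$ (standard, via the Fourier characterization of $H^s$ and dominated convergence), and therefore by \eqref{young in besov}, $\norm{(V^\epsilon-V)*L_{s,u}}_{\mathcal{C}^{\lambda-\eta-\sigma}}\lesssim \norm{V^\epsilon-V}_{H^{-\eta-\sigma}}\norm{L_{s,u}}_{H^\lambda}\to 0$, provided $\sigma$ is chosen small enough that still $\lambda-\eta-\sigma>0$ (and, if needed, non-integer). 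Hence $|A^{t,\epsilon}_{s,u}-A^t_{s,u}|\leq a_\epsilon |u-s|^\gamma$ with $a_\epsilon\to 0$ uniformly in $s,u,t,x,\omega$, and likewise $|(\delta A^{t,\epsilon}-\delta A^t)_{s,u,r}|\leq a_\epsilon |r-u|^\gamma|u-s|^{((\lambda-\eta)\wedge 1)(1/2-\epsilon')}$. Feeding the difference $A^{t,\epsilon}-A^t$ into the Sewing Lemma's stability estimate (linearity of the sewing map plus the standard bound on $\mathcal{I}$ in terms of the germ's norm) yields
\[
\bigl|(\mathcal{I}A^{t,\epsilon})_t - (\mathcal{I}A^t)_t\bigr| \lesssim a_\epsilon\, C_T(\omega)\, T^{1+\kappa} \longrightarrow 0
\]
uniformly in $x\in\R^d$ and $t\in[0,T]$.

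Finally, to pass this through the expectation: for fixed $\omega$ we have $(\mathcal{I}A^{t,\epsilon})_t\to(\mathcal{I}A^t)_t$ uniformly, and both are bounded by $C_T(\omega)T^\gamma$ uniformly in $\epsilon,x,t$; since $f\in C^1$ is bounded on the (a.s.\ bounded) range of $W$ on $[0,T]$ — more carefully, $|f(W_t)|$ has finite $\mathbb{E}^x$-expectation uniformly on compacts via $f\in C^1$ and Gaussian tails, or one simply notes $f(W_t)\exp(-(\mathcal{I}A^{t,\epsilon})_t)$ is dominated by $|f(W_t)|e^{C_T(\omega)T^\gamma}$ which is $\mathbb{E}^x$-integrable — dominated convergence gives $u^\epsilon(t,x)\to u(t,x)$, and the uniform-in-$(t,x)$ rate above (which does not involve $\mathbb{E}^x$ at all once we bound $|f(W_t)|$ appropriately, using continuity of $f$ and that the exponential factor difference is controlled pathwise by the mean value theorem $|e^{-a}-e^{-b}|\leq e^{|a|\vee|b|}|a-b|$) promotes pointwise convergence to uniform convergence on $[0,T]\times\R^d$.

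I expect the main obstacle to be the bookkeeping of exponents: one must simultaneously satisfy $\lambda>\eta$ (so Young's inequality lands in a space of positive regularity), $\gamma < 1-(\lambda+\frac d2)H$, and $\gamma + ((\lambda-\eta)\wedge 1)(\frac12-\epsilon')>1$, and verify that the hypothesis $H<\frac12(1+\eta+d/2)^{-1}$ is exactly the condition making a valid choice of $(\lambda,\gamma,\epsilon')$ possible. The cleanest route is probably to take $\lambda - \eta$ slightly larger than $1$ when $1/(2H) - d/2 > 1 + \eta$ permits, i.e.\ choose $\lambda = 1+\eta$ (borderline non-integer issues aside, handled by an arbitrarily small perturbation), so that $(\lambda-\eta)\wedge 1 = 1$ and one only needs $\gamma > 1/2$, which holds since $1-(\lambda+\frac d2)H = 1 - (1+\eta+\frac d2)H > 1 - \frac12 = \frac12$ under the hypothesis; then pick $\gamma\in(1/2, 1-(1+\eta+\frac d2)H)$ and $\epsilon'$ small enough that $\gamma + \frac12 - \epsilon' > 1$. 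The rest is the routine application of Sewing-Lemma stability and dominated convergence sketched above.
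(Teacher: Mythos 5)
Your proposal is correct and follows essentially the same route as the paper: local-time regularity plus Young's inequality to get $\mathcal{C}^{1+\delta}$ spatial regularity of $V*L_{s,u}$ with time exponent above $1/2$, the Sewing Lemma with the germ increment controlled by the Brownian H\"older modulus, linearity/stability of the sewing applied to $A^{t,\epsilon}-A^t$ measured through $\norm{V^\epsilon-V}$, and exponential integrability of the Brownian H\"older constant (the paper's Lemma \ref{exponential moments}) to pass through $\mathbb{E}^x$. The only cosmetic differences are that the paper measures $V^\epsilon-V$ directly in $H^{-\eta}$ (which suffices, since mollification converges in the $L^2$-based space itself) rather than sacrificing $\sigma$, and that it keeps the Brownian H\"older seminorm $c_\delta(\omega)$ explicit in the constants rather than absorbing it into a ``uniform'' $C_T(\omega)$, which is where your bookkeeping is slightly loose but not wrong.
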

\begin{proof}
Let us start by establishing that $u$ is well defined under the conditions stated in the Lemma. By Lemma \ref{regularity of local time}, we have for some $\delta>0$
\[
\norm{L_{s,t}}_{H^{1+\eta+\delta}}\lesssim |t-s|^{1/2+\delta}.
\]
Note that again, by Young's inequality in Besov spaces, we have
\begin{align*}
    \norm{V*L_{s,t}}_{\mathcal{C}^{1+\delta}}\leq \norm{V}_{H^{-\eta}}\norm{L_{s,t}}_{H^{1+\eta+\delta}}
    \end{align*}
    meaning again in particular that $V*L_{s,t}$ lies in $C^1$. Similar to the previous Lemma \ref{identification}, we can conclude that indeed, $A^t$ admits a sewing, since
    \begin{align*}
    |(\delta A^t)_{s,u,r}|&=|V*L_{u,r}(W_{t-s})-V*L_{u,r}(W_{t-u})|\\
    &\lesssim \underbrace{\left(\sup_{x\neq y \in [0,T]} \frac{|W_x(\omega)-W_y(\omega)|}{|x-y|^{1/2-\delta/2}} \right)}_{=:c_\delta(\omega)}|r-u|^{1/2+\delta}|u-s|^{1/2-\delta/2}.
\end{align*}
    The above ensures that for almost every $\omega\in \Omega$ the expression $(\mathcal{I}A^t)_{0,t}$ is well defined. We further demonstrate that $(\mathcal{I}A^t)_{0,t}$ admits exponential moments with respect to the measure $\mathbb{P}$, allowing to establish well posedness of \eqref{limit u}. By the Sewing Lemma \ref{sewing}, we have for $t\in [0,T]$ the a priori bound
    \begin{equation}
    \begin{split}
    |(\mathcal{I}A^t)_{0,t}|&\leq |A^t_{0,t}|+|(\mathcal{I}A^t)_{0,t}-A^t_{0,t}|\\
    &\leq \norm{V*L_{0,t}}_{\infty}+\norm{\delta A^t}_{1+\delta/2} T^{1+\delta/2}\\
    &\lesssim 1+c_\delta(\omega).
    \label{a priori sewing}
    \end{split}
    \end{equation}
    Hence, we conclude that for some $a>0$, we have
\begin{align*}
    |\mathbb{E}^x\left[ f(W_t)\exp{(-(\mathcal{I}A)_{0,t})}\right]|\lesssim \norm{f}_{\infty}\mathbb{E}^x[\exp{(a c_\delta(\omega))}]<\infty
\end{align*}
by Lemma \ref{exponential moments}. This shows that the function $u$ in \eqref{limit u} is well defined as a function in $C^0([0,T]\times \R^d)$. Towards establishing convergence, let us first remark that similar to \eqref{a priori sewing}, we have 
\[
 |(\mathcal{I}A^{t,\epsilon})_{0,t}|\lesssim 1+c_\delta(\omega)
\]
uniformly in $\epsilon>0$. This permits the following bound
\begin{align*}
    |u^\epsilon_{t}(x)-u_{t}(x)|&\leq\left|\mathbb{E}^x\left[f(W_t)(e^{-(\mathcal{I} A^t)_{t} }-e^{-(\mathcal{I} A^{t,\epsilon})_{t} }) \right]\right|\\
   &\leq \mathbb{E}^x\left[|f(W_t)|e^{ac_\delta}|(\mathcal{I} A^t)_{t}-(\mathcal{I} A^{t,\epsilon})_{t}|) \right].
\end{align*}
Next, note that due to the linearity of the Sewing operator $\mathcal{I}$
\begin{align*}
    |(\mathcal{I}A^t)_{s,t}-(\mathcal{I}A^{t,\epsilon})_{t}|\leq \norm{A^t-A^{t,\epsilon}}_{1/2}T^{1/2}+\norm{\delta (A^t-A^{t,\epsilon})}_{1+\delta/2}T^{1+\delta/2}.
\end{align*}
We have moreover that
\begin{align*}
  (A^t-A^{t,\epsilon})_{s,r}&\leq \norm{(V-V^\epsilon)*L_{s,r}}_\infty\lesssim \norm{V-V^\epsilon}_{H^{-\eta}}\norm{L_{s,r}}_{H^\eta+\delta}\lesssim \norm{V-V^\epsilon}_{H^{-\eta}} |r-s|^{1/2},
\end{align*}
as well as similar to the above calculations
\begin{align*}
    (\delta(A^t-A^{t,\epsilon}))_{s,u,r}&=|(V-V^\epsilon)*L_{u,r}(W_{t-s})-(V-V^\epsilon)*L_{u,r}(W_{t-u})|\\
    &\lesssim c_\delta(\omega) \norm{V-V^\epsilon}_{H^{-\eta}} |r-u|^{1/2+\delta}|u-s|^{1/2-\delta/2}.
\end{align*}
Overall, this permits to conclude that 
\begin{align*}
    |u^\epsilon_{t}(x)-u_{t}(x)|\lesssim \norm{V-V^\epsilon}_{H^{-\eta}}\norm{f}_{\infty}\mathbb{E}^x[c_{\delta}\exp{(a(1+c_{\delta}))}]\lesssim \norm{V-V^\epsilon}_{H^{-\eta}}\norm{f}_{\infty}
\end{align*}
exploiting Lemma \ref{exponential moments}. Hence we have established the claim.
\end{proof}

\subsection{Weak solutions}
In the following section, we undertake to establish in what sense and under what conditions the function $u$ obtained in the previous section solves our original problem. We recall again that the main obstacle to address lies in the appearance of the product $\tilde V u$. This obstacle will be overcome by establishing higher spatial regularity of $u$ building upon our robustified Feynman-Kac representation for $u$ in Lemma \ref{convergence of mollifications} provided we demand more regularity in the initial condition $f$ as well as the local time $L$. In this way, Lemma \ref{multiplication lemma} allows us to give a meaning to the product $\tilde V u$, thereby allowing to conclude that $u$ satisfies the original problem in the weak sense of Theorem \ref{main theorem}. 
\begin{lemma}[Spatial regularity of $u$]
For $n\in \mathbb{N}$ suppose  $H<\frac{1}{2}(1+\eta+n+d/2)^{-1}$, $f\in C^n$ and $V\in H^{-\eta}$. Then for every $t\in [0, T]$, the function 
\begin{equation}
    u(t,x):=\mathbb{E}^x\left[ f(W_t)\exp{\left(-(\mathcal{I} A^t)_{0,t} \right)}\right],
\end{equation}
where
\[
A^t_{s,r}=(V*L_{s,r})(W_{t-s})
\]
lies in $C^n$. Moreover, $u^\epsilon\to u$ in $C^n$, uniformly in $t\in [0,T]$.
\end{lemma}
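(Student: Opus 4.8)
The plan is to differentiate the robustified Feynman--Kac representation in the spatial variable, commuting $\partial_x$ with the expectation $\mathbb{E}^x$ and with the Sewing operator $\mathcal{I}$, and to make every estimate uniform in $(t,x)$ (and in $\epsilon$ for the mollified objects) by means of the local time bound of Lemma~\ref{regularity of local time}, Young's inequality in Besov spaces \eqref{young in besov}, and the exponential moments of Lemma~\ref{exponential moments}. Using translation invariance of Brownian motion, write $\mathbb{E}^x[\,\cdot\,]=\mathbb{E}[\,\cdot\,]$ with $W_{t-s}$ replaced by $x+B_{t-s}$ for a Brownian motion $B$ started at $0$. Then the $x$-dependence enters only through $f(x+B_t)$, which is $C^n$ by hypothesis, and through $(\mathcal{I}A^{t,x})_{0,t}$ with $A^{t,x}_{s,r}=g_{s,r}(x+B_{t-s})$, $g_{s,r}:=V*L_{s,r}$.

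\emph{Regularity input and germ estimates.} Since $H<\frac{1}{2}(1+\eta+n+d/2)^{-1}$, one can fix $\delta\in(0,1)$ with $1+\eta+n+\delta<\frac{1}{2H}-\frac{d}{2}$ and a corresponding admissible $\gamma>\frac{1}{2}$ in Lemma~\ref{regularity of local time}, so that $\|L_{s,t}\|_{H^{1+\eta+n+\delta}}\lesssim|t-s|^{1/2+\delta'}$ for some $\delta'>0$. Hence by \eqref{young in besov}, $\|g_{s,t}\|_{\mathcal{C}^{1+n+\delta}}\lesssim\|V\|_{H^{-\eta}}|t-s|^{1/2+\delta'}$ and $\|(V-V^\epsilon)*L_{s,t}\|_{\mathcal{C}^{1+n+\delta}}\lesssim\|V-V^\epsilon\|_{H^{-\eta}}|t-s|^{1/2+\delta'}$, so each $g_{s,t}\in C^{n+1}$ with all derivatives up to order $n$ Lipschitz, with the above time weights. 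Consequently, for any multi-index $k$ with $|k|\le n$, the germ $D^k_xA^{t,x}_{s,r}=(D^kg_{s,r})(x+B_{t-s})$ obeys $|D^k_xA^{t,x}_{s,r}|\le\|g_{s,r}\|_{\mathcal{C}^{1+n+\delta}}\lesssim|r-s|^{1/2+\delta'}$ and, exactly as in Lemma~\ref{identification}, $|(\delta D^k_xA^{t,x})_{s,u,r}|\lesssim c_\delta(\omega)\,\|g_{u,r}\|_{\mathcal{C}^{1+n+\delta}}\,|r-u|^{1/2+\delta}|u-s|^{1/2-\delta/2}$, all uniformly in $t,x$; thus $D^k_xA^{t,x}$ admits a Sewing, with $|D^k_x(\mathcal{I}A^{t,x})_{0,t}|\lesssim 1+c_\delta(\omega)$ uniformly in $(t,x)$ by an a priori bound of the type \eqref{a priori sewing}. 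The same estimates for difference germs yield $|D^k_x(\mathcal{I}A^{t,\epsilon,x})_{0,t}-D^k_x(\mathcal{I}A^{t,x})_{0,t}|\lesssim\|V-V^\epsilon\|_{H^{-\eta}}(1+c_\delta(\omega))$ uniformly in $(t,x)$, once the commutation below is established.

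\emph{Commuting $\partial_x$ with $\mathcal{I}$ via mollification, and conclusion.} For the mollification $V^\epsilon$ (which is smooth, so Lemma~\ref{identification} applies to $V^\epsilon$ and to each $D^kV^\epsilon$), $(\mathcal{I}A^{t,\epsilon,x})_{0,t}=\int_0^t V^\epsilon(x+B_{t-s}-w^H_s)\,ds$ is classically $C^\infty$ in $x$, and differentiating the Riemann integral $|k|$ times together with Lemma~\ref{identification} applied to $D^kV^\epsilon$ gives $D^k_x(\mathcal{I}A^{t,\epsilon,x})_{0,t}=\int_0^t(D^kV^\epsilon)(x+B_{t-s}-w^H_s)\,ds=(\mathcal{I}D^k_xA^{t,\epsilon,x})_{0,t}$, i.e. $\partial_x$ commutes with $\mathcal{I}$ at the mollified level. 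By the regularity input the germs $D^k_xA^{t,\epsilon,x}$ converge to $D^k_xA^{t,x}$ in the relevant Hölder seminorms (controlling both the germ and its $\delta$), uniformly in $(t,x)$ at rate $\|V-V^\epsilon\|_{H^{-\eta}}$; by linearity of $\mathcal{I}$ and the Sewing bound, exactly as in the proof of Lemma~\ref{convergence of mollifications}, this gives $D^k_x(\mathcal{I}A^{t,\epsilon,x})_{0,t}\to(\mathcal{I}D^k_xA^{t,x})_{0,t}$ uniformly in $(t,x)$, while $(\mathcal{I}A^{t,\epsilon,x})_{0,t}\to(\mathcal{I}A^{t,x})_{0,t}$ uniformly, so $x\mapsto(\mathcal{I}A^{t,x})_{0,t}$ is $C^n$ with $D^k_x(\mathcal{I}A^{t,x})_{0,t}=(\mathcal{I}D^k_xA^{t,x})_{0,t}$. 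Finally, differentiating $u^\epsilon(t,x)=\mathbb{E}[f(x+B_t)e^{-(\mathcal{I}A^{t,\epsilon,x})_{0,t}}]$ under the expectation: by the Leibniz rule in $f$ and Fa\`a di Bruno in the exponential, $D^k_xu^\epsilon$ is the expectation of a finite sum of terms $(D^{k_1}f)(x+B_t)\,P\big((D^j_x(\mathcal{I}A^{t,\epsilon,x})_{0,t})_{|j|\le|k_2|}\big)e^{-(\mathcal{I}A^{t,\epsilon,x})_{0,t}}$ with universal polynomials $P$; since $\|D^{k_1}f\|_\infty\le\|f\|_{C^n}$, $|D^j_x(\mathcal{I}A^{t,\epsilon,x})_{0,t}|\lesssim 1+c_\delta(\omega)$ uniformly in $\epsilon,t,x$ (using $\|V^\epsilon\|_{H^{-\eta}}\lesssim\|V\|_{H^{-\eta}}$), and $e^{-(\mathcal{I}A^{t,\epsilon,x})_{0,t}}\lesssim e^{a(1+c_\delta(\omega))}$, each term is dominated by $\|f\|_{C^n}\,\tilde P(c_\delta(\omega))e^{a(1+c_\delta(\omega))}\in L^1$ by Lemma~\ref{exponential moments}, which justifies the differentiation and shows $u^\epsilon(t,\cdot)\in C^n$. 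Running the same estimates on differences yields $\sup_{t\in[0,T]}\|u^\epsilon(t,\cdot)-u(t,\cdot)\|_{C^n}\lesssim\|V-V^\epsilon\|_{H^{-\eta}}\|f\|_{C^n}\to0$; since $\|\cdot\|_{C^n}$ is a complete norm and the convergence is uniform in $t$, it follows that $u(t,\cdot)\in C^n$ for every $t\in[0,T]$ and $u^\epsilon\to u$ in $C^n$ uniformly in $t\in[0,T]$.

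\emph{Main obstacle.} The genuinely delicate point is the commutation of $\partial_x$ with the Sewing on $[0,t]$. The mollification device circumvents a direct ``differentiation of a Sewing in a parameter'' argument---at the mollified level the Sewing is literally the Riemann integral of Lemma~\ref{identification}, so differentiation is classical---but one must then verify that every germ seminorm entering the Sewing bound converges uniformly in all parameters $(s,u,r,t,x)$ and at a quantitative rate in $\|V-V^\epsilon\|_{H^{-\eta}}$. This is precisely where the sharpened local time regularity (absorbing the extra $n$ derivatives, hence the condition $H<\frac{1}{2}(1+\eta+n+d/2)^{-1}$) and the almost sure Hölder control $c_\delta(\omega)$ of Brownian motion are needed.
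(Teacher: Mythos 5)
Your proposal is correct, and its overall skeleton --- differentiate the robustified Feynman--Kac representation, absorb the extra $n$ derivatives into the local time via the strengthened condition on $H$ and Young's inequality \eqref{young in besov}, control the resulting Bell-polynomial/Leibniz expansion by the a priori Sewing bound $|(\mathcal{I}D^k_xA^t)_{0,t}|\lesssim 1+c_\delta(\omega)$, and close with the exponential moments of Lemma~\ref{exponential moments} --- matches the paper's proof. The one step where you take a genuinely different route is the commutation $D^k_x(\mathcal{I}A^t(x))_{0,t}=(\mathcal{I}D^k_xA^t(x))_{0,t}$, which you correctly single out as the delicate point. The paper proves it directly at the level of the (possibly distributional) $V$ by forming difference-quotient germs $A^{t,n}_{s,r}=n\bigl((V*L_{s,r})(W_{t-s}+x+1/n)-(V*L_{s,r})(W_{t-s}+x)\bigr)$, checking $\|A^{t,n}-D_xA^t\|_{1/2}\to0$ together with a uniform bound on $\|\delta A^{t,n}\|_{1+\delta/2}$, and invoking the stability result for Sewings, Lemma~\ref{sewing convergence}. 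You instead commute the derivative only at the mollified level --- where by Lemma~\ref{identification} the Sewing is literally a Riemann integral and differentiation under the integral is classical --- and then pass to the limit $\epsilon\to0$ using the uniform (in $t,x$) convergence of the derivative germs at rate $\|V-V^\epsilon\|_{H^{-\eta}}$ plus the standard fact that uniform convergence of all derivatives up to order $n$ propagates $C^n$-regularity to the limit. Both mechanisms are sound; the paper's argument establishes the identity for $u$ intrinsically without reference to the approximating sequence, whereas yours is arguably more economical since the convergence estimates for the difference germs are needed anyway for the second assertion ($u^\epsilon\to u$ in $C^n$ uniformly in $t$), so you get the commutation essentially for free from them.
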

\begin{proof}
Fix $t\in [0,T]$. We show that the function
\[
x\to (\mathcal{I} A^t(x))_{t}
\]
where
\[
A_{s,r}^t(x)=(V*L_{s,r})(W_{t-s}+x)
\]
is $n$ times differentiable and that moreover, all derivatives up to order $n$ are uniformly bounded in space, integrable with respect to $\mathbb{P}$. Towards this end, let us note that 
\begin{equation}
    D^k_x(\mathcal{I} A^t(x))_{t}=(\mathcal{I} (D^k_xA^t)(x))_{t}.
    \label{derivative commutes}
\end{equation}
This can be established by using Lemma \ref{sewing convergence}. For the sake of conciseness, we restrict ourselves to the case $d=1$ and one derivative. Let us define 
\[
A^{t, n}_{s,r}=n\left((V*L_{s,r})(W_{t-s}+x+1/n)-(V*L_{s,r})(W_{t-s}+x)\right).
\]
Then it can be seen easily that 
\[
\norm{A^{t,n}-D_xA^t}_{1/2}\to 0
\]
uniformly in $t\in [0,T]$. Moreover, 
\begin{align*}
    \delta A^{t,n}_{s,u,r}&=n\left((V*L_{s,u})(W_{t-r}+x+1/n)-(V*L_{s,u})(W_{t-r}+x)\right)\\
    &-n\left((V*L_{s,u})(W_{t-u}+x+1/n)+(V*L_{s,u})(W_{t-u}+x)\right)\\
    &=(V*D_xL_{s,u})(W_{t-r}+x)-(V*D_xL_{s,u})(W_{t-u}+x)+O(1/n)|u-s|^{1+2\delta}\\&\lesssim c_\delta(\omega)|u-s|^{1/2+\delta}|r-u|^{1/2-\delta/2}+O(1/n)|u-s|^{1+2\delta}
\end{align*}
meaning that indeed $\sup_n\norm{\delta A^{t, n}}_{1+\delta/2}<\infty$, allowing to conclude \eqref{derivative commutes} by Lemma \ref{sewing convergence}. By the Faà di Bruno formula, we have 
\begin{align*}
    &\frac{d^n}{dx^n}\left( f(W_t+x)\exp{\left(-(\mathcal{I} A^t(x))_{t}\right)}\right)\\
    =&\exp{\left(-(\mathcal{I} A^t(x))_{t}\right)} \sum_{k=0}^n \binom{n}{k} \left(D^{n-k}_xf(W_t+x)\right)B_k(-(\mathcal{I} D_xA^t(x))_{t}, \dots -(\mathcal{I} D^k_xA^t(x))_{t}),
\end{align*}
where $B_k$ denotes the $k$-th complete Bell polynomial with the convention $B_0=1$.
Note in particular that $(\mathcal{I} D^k_xA^t(x))_{t}$ is uniformly bounded in space due to the regularity of the local time in this setting. Moreover, we have for any $k\leq n$ the a priori bound
\[
|(\mathcal{I}D^k_x A^t(x))_{t}|\lesssim (1+c_\delta(\omega)).
\]
We therefore have for any $a>0$ 
\[
|B_k(-D(\mathcal{I} A^t)_{0,t}, \dots D^k(\mathcal{I} A^t)_{0,t})|\lesssim \exp{(a c_\gamma(\omega))}
\]
wherefore 
\[
\frac{d^n}{dx^n}\left( f(B_t+x)\exp{\left(-(\mathcal{I} A^t)_{0,t}\right)}\right)
\]
is well defined, uniformly bounded in $x\in \R^d$ and integrable with respect to $\mathbb{P}$. Overall, this allows to conclude that indeed $u(t, \cdot)\in C^n$ for any $t\in [0,T]$. Finally, going through similar considerations for $u^\epsilon$ and remarking that 
\[
(\mathcal{I}D^k_x (A^t-A^{t, \epsilon}(x)))_t\to 0
\]
uniformly in $t, x\in [0,T]\times \R^d$, we infer that $u^\epsilon(t, \cdot)\to u(t, \cdot)$ in $C^n$ uniformly in $t\in [0,T]$. 
\end{proof}

As a corollary, invoking Lemma \ref{multiplication lemma}, we can now conclude the proof of Theorem \ref{main theorem} by observing the following:
\begin{cor}
Suppose  $H<\frac{1}{2}(1+\eta+\lceil \eta \rceil+d/2)^{-1}$, $f\in C^{\lceil \eta \rceil}$ and $V\in H^{-\eta}$. Then for any $t\in [0,T]$, the product 
\[
\langle (\tilde{V} u)_t, \varphi \rangle:= \langle V(\cdot) u(t, \cdot+w^H_t), \varphi (\cdot+w^H_t)\rangle
\]
is well defined in the sense of Lemma  \ref{multiplication lemma}. In particular, we infer that $u$ is a weak solution to the original problem. 
\end{cor}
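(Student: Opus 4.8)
The plan is to read off the first claim from the two preceding lemmas together with Lemma~\ref{multiplication lemma}, and then to obtain the weak equation by passing to the limit $\epsilon\to0$ in the weak formulation satisfied by the smooth approximations $u^\epsilon$.

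First I would invoke the preceding lemma on the spatial regularity of $u$ with $n=\lceil\eta\rceil$: under $H<\tfrac12(1+\eta+\lceil\eta\rceil+d/2)^{-1}$ and $f\in C^{\lceil\eta\rceil}$ it gives $u(t,\cdot)\in C^{\lceil\eta\rceil}$ uniformly in $t\in[0,T]$ and $u^\epsilon(t,\cdot)\to u(t,\cdot)$ in $C^{\lceil\eta\rceil}$ uniformly in $t$. Since $\eta\notin\mathbb{N}$, fix $\alpha$ with $\eta<\alpha<\lceil\eta\rceil$, so that $C^{\lceil\eta\rceil}\hookrightarrow\mathcal{C}^\alpha$; by translation invariance of the Besov norm, $u(t,\cdot+w^H_t)\in\mathcal{C}^\alpha$ with the same norm. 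As $\alpha>0$ and $\alpha-\eta>0$, Lemma~\ref{multiplication lemma} then yields, for any $\epsilon'>0$, a well-defined element $V(\cdot)u(t,\cdot+w^H_t)\in H^{-\eta-\epsilon'}$ with $\norm{V(\cdot)u(t,\cdot+w^H_t)}_{H^{-\eta-\epsilon'}}\lesssim\norm{u(t,\cdot)}_{\mathcal{C}^\alpha}\norm{V}_{H^{-\eta}}$, and pairing it against $\varphi(\cdot+w^H_t)\in C_c^\infty\subset H^{\eta+\epsilon'}$ makes $\langle(\tilde V u)_t,\varphi\rangle$ meaningful. This settles the first assertion.

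For the weak-solution claim I would start from the observation that, $\tilde{V^\epsilon}$ being smooth, each $u^\epsilon$ is a classical, hence weak, solution of \eqref{mollified prob}; after the change of variables $y\mapsto y+w^H_s$ its potential term reads $\langle\tilde{V^\epsilon}_s u^\epsilon_s,\varphi\rangle=\langle V^\epsilon(\cdot)u^\epsilon(s,\cdot+w^H_s),\varphi(\cdot+w^H_s)\rangle$. I would then pass to the limit $\epsilon\to0$ in each term of the weak identity for $u^\epsilon$. The term $\langle u^\epsilon_t-f,\varphi\rangle$ and the integral $\int_0^t\langle u^\epsilon_s,\tfrac12\Delta\varphi\rangle\,ds$ converge to their counterparts because $u^\epsilon_t\to u_t$ uniformly (Lemma~\ref{convergence of mollifications}) and $u^\epsilon$ is bounded on $[0,T]\times\R^d$ uniformly in $\epsilon$, so dominated convergence applies in $s$. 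For the potential term, with $g^\epsilon_s:=u^\epsilon(s,\cdot+w^H_s)$ and $g_s:=u(s,\cdot+w^H_s)$, bilinearity of the multiplication map gives
\[
\norm{V^\epsilon g^\epsilon_s-Vg_s}_{H^{-\eta-\epsilon'}}\lesssim\norm{V^\epsilon-V}_{H^{-\eta}}\norm{g^\epsilon_s}_{\mathcal{C}^\alpha}+\norm{V}_{H^{-\eta}}\norm{g^\epsilon_s-g_s}_{\mathcal{C}^\alpha};
\]
the spatial-regularity lemma supplies $\sup_\epsilon\sup_{s\in[0,T]}\norm{g^\epsilon_s}_{\mathcal{C}^\alpha}<\infty$ and $\sup_{s\in[0,T]}\norm{g^\epsilon_s-g_s}_{\mathcal{C}^\alpha}\to0$ (by translation invariance the norms coincide with those of $u^\epsilon(s,\cdot)$ and $u^\epsilon(s,\cdot)-u(s,\cdot)$), while $\norm{V^\epsilon-V}_{H^{-\eta}}\to0$ by standard properties of mollification in $H^{-\eta}$. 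Hence the integrand converges for every $s$ and is bounded uniformly in $(\epsilon,s)$ by a constant times $\norm{\varphi}_{H^{\eta+\epsilon'}}$, so dominated convergence yields convergence of the potential integral to $\int_0^t\langle V(\cdot)u(\cdot+w^H_s),\varphi(\cdot+w^H_s)\rangle\,ds$; assembling the three limits produces exactly the weak formulation of Theorem~\ref{main theorem}.

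The genuinely hard analytic work is already packaged in the two preceding lemmas — the exponential-moment estimates making $u$ and its spatial derivatives well defined, and the uniform-in-$\epsilon$ $\mathcal{C}^\alpha$-control and $\mathcal{C}^\alpha$-convergence of $u^\epsilon$. Given those inputs, the only delicate point here is that the product and the pairing survive the limit, i.e. the joint continuity $\mathcal{C}^\alpha\times H^{-\eta}\to H^{-\eta-\epsilon'}$ combined with translation invariance of Besov norms; one also checks, routinely, that the limiting time integrand is measurable, being a uniform-in-$s$ limit of continuous functions of $s$ (each $u^\epsilon$ is smooth in $(s,x)$ and $s\mapsto w^H_s$ is continuous).
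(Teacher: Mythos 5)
Your proof is correct and follows the same route the paper intends: the paper's own justification of this corollary is a single sentence invoking the spatial-regularity lemma together with Lemma \ref{multiplication lemma}, and your first paragraph is exactly that argument made explicit (choice of $\alpha\in(\eta,\lceil\eta\rceil)$, which requires the theorem's hypothesis $\eta\notin\mathbb{N}$, plus translation invariance of the $\mathcal{C}^\alpha$ norm). Your second paragraph --- passing to the limit $\epsilon\to0$ in the weak formulation for $u^\epsilon$ via the uniform-in-$t$ $\mathcal{C}^\alpha$ convergence and the bilinearity of the product map --- supplies details the paper omits entirely; the only cosmetic discrepancy is the sign of the potential term, where the change of variables yields $-\int_0^t\langle V(\cdot)u(s,\cdot+w^H_s),\varphi(\cdot+w^H_s)\rangle\,ds$ rather than the $+$ printed in Theorem \ref{main theorem}, which appears to be a typo in the paper rather than a gap in your argument.
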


\begin{subappendices}
\section{Appendix}

\subsection{Local time and occupation times formula}
\label{local time concepts}
We recall for the reader the basic concepts of occupation measures, local times and the occupation times formula. A comprehensive review paper on these topics is \cite{horowitz}. 
\begin{definition}
Let $w:[0,T]\to \R^d$ be a measurable path. Then the occupation measure at time $t\in [0,T]$, written $\mu^w_t$ is the Borel measure on $\R^d$ defined by 
\[
\mu^w_t(A):=\lambda(\{ s\in [0,t]:\ w_s\in A\}), \quad A\in \mathcal{B}(\R^d),
\]
where $\lambda$ denotes the standard Lebesgue measure. 
\end{definition}
The occupation measure thus measures how much time the process $w$ spends in certain Borel sets. Provided for any $t\in [0,T]$, the measure is absolutely continuous with respect to the Lebesgue measure on $\R^d$, we call the corresponding Radon-Nikodym derivative local time of the process $w$:
\begin{definition}
Let $w:[0,T]\to \R^d$ be a measurable path. Assume that there exists a measurable function $L^w:[0,T]\times \R^d\to \R_+$ such that 
\[
\mu^w_t(A)=\int_A L^w_t(z)dz, 
\]
for any $A\in \mathcal{B}(\R^d)$ and  $t\in [0,T]$. Then we call $L^w$ local time of $w$. 
\end{definition}
Note that by the definition of the occupation  measure, we have for any bounded measurable function $f:\R^d\to \R$ that 
\begin{equation}
    \int_0^tf(w_s)ds=\int_{\R^d} f(z)\mu^w_t(dz).
    \label{occupation times formula}
\end{equation}
The above equation \eqref{occupation times formula} is called occupation times formula. Remark that in particular, provided $w$ admits a local time, we also have for any $x\in \R^d$
\begin{equation}
    \int_0^tf(x-w_s)ds=\int_{\R^d} f(x-z)\mu^w_t(dz)=\int_{\R^d}f(x-z)L^w_t(z)dz=(f*L^w_t)(x).
\end{equation}
\subsection{The Sewing Lemma}
\label{sewing concepts}
We recall the Sewing Lemma due to \cite{gubi} (see also \cite[Lemma 4.2]{frizhairer}). Let $E$ be a Banach space, $[0,T]$ a given interval. Let $\Delta_n$ denote the $n$-th simplex of $[0,T]$, i.e. $\Delta_n:=\{(t_1, \dots, t_n)| 0\leq t_1\dots\leq t_n\leq T \} $. For a function $A:\Delta_2\to E$ define the mapping $\delta A: \Delta_3\to E$ via
\[
(\delta A)_{s,u,t}:=A_{s,t}-A_{s,u}-A_{u,t}.
\]
Provided $A_{t,t}=0$ we say that for $\alpha, \beta>0$ we have $A\in C^{\alpha, \beta}_2(E)$ if $\norm{A}_{\alpha, \beta}<\infty$, where
\[
\norm{A}_\alpha:=\sup_{(s,t)\in \Delta_2}\frac{\norm{A_{s,t}}_E}{|t-s|^\alpha}, \qquad \norm{\delta A}_{\beta}:=\sup_{(s,u,t)\in \Delta_3}\frac{\norm{(\delta A)_{s,u,t}}_E}{|t-s|^\beta}
\]
and $\norm{A}_{\alpha, \beta}:=\norm{A}_\alpha+\norm{\delta A}_\beta$.  For a function $f:[0,T]\to E$, we denote $f_{s,t}:=f_t-f_s$. Moreover, if for any sequence $(\mathcal{P}^n([s,t]))_n$ of partitions of $[s,t]$ whose mesh size goes to zero, the quantity 
\[
\lim_{n\to \infty}\sum_{[u,v]\in \mathcal{P}^n([s,t])}A_{u,v}
\]
converges to the same limit, we note
\[
(\mathcal{I} A)_{s,t}:=\lim_{n\to \infty}\sum_{[u,v]\in \mathcal{P}^n([s,t])}A_{u,v}.
\]

\begin{lemma}[Sewing, \cite{gubi}]
Let $0<\alpha\leq 1<\beta$. Then for any $A\in C^{\alpha, \beta}_2(E)$, $(\mathcal{I} A)$ is well defined (we say that $A$ admits the sewing $(\mathcal{I} A)$). Moreover, denoting $(\mathcal{I} A)_t:=(\mathcal{I} A)_{0,t}$, we have $(\mathcal{I} A)\in C^\alpha([0,T], E)$ and $(\mathcal{I} A)_0=0$ and for some constant $c>0$ depending only on $\beta$ we have
\[
\norm{(\mathcal{I} A)_{t}-(\mathcal{I} A)_{s}-A_{s,t}}_{E}\leq c\norm{\delta A}_\beta |t-s|^\beta.
\]
\label{sewing}
\end{lemma}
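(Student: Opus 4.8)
The plan is to run the classical coarsening argument for the Sewing Lemma. For a partition $\mathcal{P}=\{s=u_0<u_1<\dots<u_N=t\}$ of $[s,t]$ I write $A_{\mathcal{P}}:=\sum_{i=1}^N A_{u_{i-1},u_i}$ for the associated Riemann-type sum. The entire proof hinges on one a priori estimate comparing $A_{\mathcal{P}}$ with the single-interval value $A_{s,t}$, uniformly over all partitions; once this is secured, convergence, additivity, the stated bound, and the Hölder regularity all follow quickly.

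First I would establish the coarsening step. Given $\mathcal{P}$ with $N\geq 2$ intervals, deleting an interior point $u_i$ changes the sum by exactly
\[
A_{\mathcal{P}}-A_{\mathcal{P}\setminus\{u_i\}}=-(\delta A)_{u_{i-1},u_i,u_{i+1}},
\]
whose norm is at most $\norm{\delta A}_\beta (u_{i+1}-u_{i-1})^\beta$. Since each of the $N-1$ consecutive-pair lengths $u_{i+1}-u_{i-1}$ is counted, every subinterval contributes to at most two such pairs, so these lengths sum to at most $2(t-s)$; the pigeonhole principle then supplies an interior point with $u_{i+1}-u_{i-1}\leq 2(t-s)/(N-1)$. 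Removing the cheapest interior point repeatedly, peeling $\mathcal{P}$ down to the trivial partition $\{s,t\}$ and summing over the stages with $k=N,N-1,\dots,2$ intervals yields
\[
\norm{A_{\mathcal{P}}-A_{s,t}}_E\leq \norm{\delta A}_\beta\,(2(t-s))^\beta\sum_{j=1}^{\infty}j^{-\beta}=c\,\norm{\delta A}_\beta\,|t-s|^\beta,
\]
with $c=2^\beta\zeta(\beta)$ finite precisely because $\beta>1$. This bound is uniform in $\mathcal{P}$ and is the heart of the argument.

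Next I would upgrade this to convergence. Comparing any partition $\mathcal{P}$ with a refinement $\mathcal{Q}\supseteq\mathcal{P}$, the sum $A_{\mathcal{Q}}$ splits over the subintervals of $\mathcal{P}$, and applying the previous estimate on each subinterval $[u,v]\in\mathcal{P}$ gives
\[
\norm{A_{\mathcal{P}}-A_{\mathcal{Q}}}_E\leq c\,\norm{\delta A}_\beta\sum_{[u,v]\in\mathcal{P}}|v-u|^\beta\leq c\,\norm{\delta A}_\beta\,|\mathcal{P}|^{\beta-1}(t-s),
\]
where $|\mathcal{P}|$ is the mesh. For two arbitrary partitions I pass through their common refinement; since $\beta>1$ the right-hand side vanishes as the meshes tend to zero, so $(A_{\mathcal{P}^n})_n$ is Cauchy in the Banach space $E$ and its limit is independent of the chosen partition sequence. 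This defines $(\mathcal{I}A)_{s,t}$, and letting the mesh go to zero in the uniform bound produces $\norm{(\mathcal{I}A)_{s,t}-A_{s,t}}_E\leq c\,\norm{\delta A}_\beta|t-s|^\beta$.

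Finally I would collect the algebraic and regularity consequences. Additivity $(\mathcal{I}A)_{s,u}+(\mathcal{I}A)_{u,t}=(\mathcal{I}A)_{s,t}$ follows by forcing $u$ into the approximating partitions and splitting the sums; setting $(\mathcal{I}A)_t:=(\mathcal{I}A)_{0,t}$ then gives $(\mathcal{I}A)_{s,t}=(\mathcal{I}A)_t-(\mathcal{I}A)_s$ and $(\mathcal{I}A)_0=0$ (using $A_{0,0}=0$), so the displayed estimate is exactly the claimed one. The Hölder bound is immediate, since
\[
\norm{(\mathcal{I}A)_t-(\mathcal{I}A)_s}_E\leq \norm{A_{s,t}}_E+c\norm{\delta A}_\beta|t-s|^\beta\leq \norm{A}_\alpha|t-s|^\alpha+cT^{\beta-\alpha}\norm{\delta A}_\beta|t-s|^\alpha,
\]
using $\alpha\leq 1<\beta$ and $|t-s|\leq T$, whence $(\mathcal{I}A)\in C^\alpha([0,T],E)$. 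The only genuinely delicate point is the uniform a priori bound of the first step: it is where $\beta>1$ is truly used, both for the summability of $\sum_j j^{-\beta}$ and for the vanishing mesh factor $|\mathcal{P}|^{\beta-1}$, and getting the pigeonhole and telescoping bookkeeping right is the crux.
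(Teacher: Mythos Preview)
Your argument is the standard, correct proof of the Sewing Lemma via the coarsening/pigeonhole mechanism, and the bookkeeping is fine. Note, however, that the paper does not supply its own proof of this statement: it is merely quoted from \cite{gubi} (see also \cite[Lemma 4.2]{frizhairer}) as a known result in the appendix, so there is nothing to compare against beyond observing that your proof is essentially the one found in those references.
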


\begin{lemma}[Lemma A.2 \cite{Galeati2021}]
\label{sewing convergence}
For $0<\alpha\leq 1<\beta$ and $E$ a Banach space, let  $A\in C^{\alpha, \beta}_2(E)$ and $ (A^n)_n\subset C^{\alpha, \beta}_2(E)$ such that for some $R>0$ $\sup_{n\in \mathbb{N}}\norm{\delta A^n}_\beta\leq R$ and such that $\norm{A^n-A}_\alpha\to 0$. Then \[\norm{\mathcal{I}(A-A^n)}_\alpha\to 0.\]
\end{lemma}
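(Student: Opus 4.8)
The plan is to reduce to a single sewing via linearity and then combine the quantitative Sewing bound with the additivity of the sewn increment through a two-scale argument. Set $B^n := A - A^n$; since $C^{\alpha,\beta}_2(E)$ is a linear space and the Sewing operator is linear, $\mathcal{I}(A-A^n) = \mathcal{I}B^n$, so it suffices to show $\norm{\mathcal{I}B^n}_\alpha \to 0$. The hypotheses give me exactly two pieces of information about $B^n$: first, $\norm{B^n}_\alpha = \norm{A - A^n}_\alpha \to 0$; second, $\norm{\delta B^n}_\beta \le \norm{\delta A}_\beta + \sup_n \norm{\delta A^n}_\beta \le \norm{\delta A}_\beta + R =: M$, a bound uniform in $n$. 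Note that I do \emph{not} have smallness of $\norm{\delta B^n}_\beta$, only boundedness, and this is the source of the difficulty.

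First I would record the increment bound furnished by the Sewing Lemma \ref{sewing}. Writing $(\mathcal{I}B^n)_{s,t}$ for the additive increment $(\mathcal{I}B^n)_t - (\mathcal{I}B^n)_s$, the Lemma gives $\norm{(\mathcal{I}B^n)_{s,t} - B^n_{s,t}}_E \le cM|t-s|^\beta$ and hence
\[
\norm{(\mathcal{I}B^n)_{s,t}}_E \le \norm{B^n}_\alpha\,|t-s|^\alpha + cM\,|t-s|^\beta.
\]
The main obstacle appears immediately: dividing by $|t-s|^\alpha$ leaves a term $cM|t-s|^{\beta-\alpha}$ which does not vanish as $n\to\infty$, and over macroscopic increments $|t-s|\sim T$ it is of order one. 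Thus the naive estimate alone only yields $\limsup_n \norm{\mathcal{I}B^n}_\alpha \le cMT^{\beta-\alpha}$, not convergence to zero, and the whole point is to remove this residual term.

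To do so I would exploit the additivity of $\mathcal{I}B^n$ to transfer small-scale control to all scales. Fixing a scale $h \in (0,1]$, for any $s<t$ with $|t-s|>h$ I partition $[s,t]$ uniformly into $N = \lceil |t-s|/h\rceil$ subintervals of length $\ell \le h$, use additivity to write $(\mathcal{I}B^n)_{s,t}$ as the sum of the $N$ subinterval increments, and apply the increment bound on each. After dividing by $|t-s|^\alpha$, bounding $N$ above and below in terms of $|t-s|/h$, and using $|t-s| \le T$, this produces an estimate of the form
\[
\norm{\mathcal{I}B^n}_\alpha \lesssim \norm{B^n}_\alpha\, h^{\alpha-1} + M\, h^{\beta-1},
\]
with implicit constant depending only on $T,\alpha,\beta$; here I use $\alpha \le 1$ together with $h\le 1$ (so that $h^{\alpha-1}\ge 1$ and the complementary small-scale regime $|t-s|\le h$ is dominated by the same right-hand side) and $\beta>1$ (so that the exponent $\beta-1$ is strictly positive). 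Finally I optimize the two parameters in turn: given $\varepsilon>0$, I first fix $h$ small enough that $M h^{\beta-1} < \varepsilon/2$, and then, with $h$ frozen, choose $n$ large enough that $\norm{B^n}_\alpha h^{\alpha-1} < \varepsilon/2$, which is possible since $\norm{B^n}_\alpha \to 0$. This gives $\norm{\mathcal{I}B^n}_\alpha < \varepsilon$ for all large $n$ and hence the claim. The only delicate step is this chaining-and-optimization argument; everything else is a direct application of Lemma \ref{sewing} and the triangle inequality.
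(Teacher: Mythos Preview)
The paper does not give its own proof of this lemma: it is stated in the appendix with an explicit citation to \cite{Galeati2021} and no argument is supplied. So there is nothing to compare your approach against on the paper's side.

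Your argument is correct. The key identity you use, that $(\mathcal{I}B^n)_{s,t}$ is additive and can therefore be telescoped over a uniform partition of $[s,t]$ into pieces of length at most $h$, together with the quantitative remainder estimate from Lemma~\ref{sewing} on each piece, is exactly the standard mechanism for trading the uncontrolled $\beta$-remainder against the small scale $h$. The resulting two-term bound $\norm{\mathcal{I}B^n}_\alpha \lesssim \norm{B^n}_\alpha\,h^{\alpha-1} + M\,h^{\beta-1}$ is correct (the case $|t-s|\le h$ is indeed absorbed as you claim, since $h\le 1$, $\alpha\le 1$ and $\beta-\alpha\ge \beta-1$), and the two-step choice of first $h$ and then $n$ finishes the proof. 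One cosmetic remark: in your derivation you implicitly use $N\ell = |t-s|$ and $\ell\le h$, which after dividing by $|t-s|^\alpha$ produces the factor $|t-s|^{1-\alpha}\le T^{1-\alpha}$; it would not hurt to display this line explicitly so the reader sees where the $h^{\alpha-1}$ and the $T$-dependence come from.
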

\subsection{Exponential moments for the Hölder modulus of continuity of Brownian motion}
For the sake of completeness, we provide a sketch of the proof that the $\gamma$-H\" older modulus of continuity of Brownian motion is exponentially integrable for $\gamma\in (0, 1/2)$. Refer also to \cite{hoelderexponential} for more refined integrability statements. 
\begin{lemma}
Let $B$ be a standard Brownian motion. Then for any $\gamma<1/2$ and $a>0$, we have
\[
\mathbb{E}\left[\exp{\left(a\sup_{s\neq t\in [0,T]} \frac{|B_t-B_s|}{|t-s|^\gamma}\right)}\right]<\infty.
\]
\label{exponential moments}
\end{lemma}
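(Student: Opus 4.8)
The final statement to prove is the Lemma on exponential moments for the H\"older modulus of continuity of Brownian motion: for a standard Brownian motion $B$, any $\gamma<1/2$ and any $a>0$, $\mathbb{E}\left[\exp\left(a\sup_{s\neq t\in[0,T]}\frac{|B_t-B_s|}{|t-s|^\gamma}\right)\right]<\infty$.

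The plan is to reduce the supremum of the increment ratio to a quantity with Gaussian-type tails via a Garsia--Rodemich--Rumsey (GRR) argument. First I would invoke the GRR inequality with $\Psi(x)=\exp(\lambda x^2)$ (or $\Psi(x)=x^p$ for large $p$, then optimize) and $p(u)=|u|^{\gamma'}$ for some $\gamma<\gamma'<1/2$: this produces the pointwise bound
\[
\frac{|B_t-B_s|}{|t-s|^\gamma}\leq C_{\gamma,\gamma',T}\,|t-s|^{\gamma'-\gamma}\,\Psi^{-1}\!\left(\frac{U}{|t-s|^{2\gamma'}}\right),\qquad U:=\int_0^T\!\!\int_0^T \Psi\!\left(\frac{|B_u-B_v|}{|u-v|^{\gamma'}}\right)du\,dv.
\]
Taking the supremum over $s\neq t\in[0,T]$ and using $|t-s|\leq T$, this yields $\sup_{s\neq t}\frac{|B_t-B_s|}{|t-s|^\gamma}\leq C\big(1+U^{1/2}\big)$ after using that $\Psi^{-1}(y)=\sqrt{(\log^+ y)/\lambda}$ grows slower than $\sqrt{y}$; more precisely one gets a bound of the form $\sup \leq C(1+\sqrt{U})$ possibly with an extra additive constant absorbing the behavior near $|t-s|\to 0$. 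Hence it suffices to show $\mathbb{E}[\exp(a'\sqrt{U})]<\infty$ for every $a'>0$, and since $\exp(a'\sqrt{U})\leq \exp(a'^2/(4\epsilon) )\exp(\epsilon U)$, it suffices that $\mathbb{E}[\exp(\epsilon U)]<\infty$ for some $\epsilon>0$.

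Next I would estimate $\mathbb{E}[\exp(\epsilon U)]$. By Jensen's inequality applied to the normalized double integral over $[0,T]^2$,
\[
\exp(\epsilon U)=\exp\left(\epsilon T^2 \cdot \frac{1}{T^2}\int_0^T\!\!\int_0^T \Psi\!\left(\frac{|B_u-B_v|}{|u-v|^{\gamma'}}\right)du\,dv\right)\leq \frac{1}{T^2}\int_0^T\!\!\int_0^T \exp\!\left(\epsilon T^2\,\Psi\!\left(\frac{|B_u-B_v|}{|u-v|^{\gamma'}}\right)\right)du\,dv,
\]
so by Tonelli it remains to bound $\mathbb{E}\big[\exp\big(\epsilon T^2 \exp(\lambda Z_{u,v}^2)\big)\big]$ uniformly in $u\neq v$, where $Z_{u,v}:=|B_u-B_v|/|u-v|^{\gamma'}$. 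Here one uses that $B_u-B_v\sim \mathcal{N}(0,|u-v|)$, so $Z_{u,v}=|u-v|^{1/2-\gamma'}|g|$ with $g$ a standard normal; since $1/2-\gamma'>0$ and $|u-v|\leq T$, we have $Z_{u,v}^2\leq T^{1-2\gamma'} g^2$. Choosing $\lambda>0$ small enough that $\lambda T^{1-2\gamma'}<\tfrac12$ makes $\mathbb{E}[\exp(\lambda Z_{u,v}^2)]$ finite and bounded by a constant $M$ independent of $u,v$; then choosing $\epsilon$ small (depending on $\lambda, T, M$) controls the iterated exponential --- this is the one genuinely delicate point, since $\exp(\exp(\cdot))$ is a very fast-growing function, so I would instead work directly: bound $\mathbb{E}[\exp(c\exp(\lambda Z_{u,v}^2))]$ by splitting $\{Z_{u,v}^2\leq R\}$ and its complement, using $\exp(c\exp(\lambda R))$ on the first event and a Gaussian tail estimate $\mathbb{P}(Z_{u,v}^2>r)\leq e^{-r/(2T^{1-2\gamma'})}$ against $\exp(c\exp(\lambda r))$ on the second, which is integrable in $r$ precisely when $\lambda$ is small and $c$ is subsequently small.

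The main obstacle is this last step: making the iterated exponential $\exp(\epsilon\exp(\lambda Z^2))$ integrable. The cleanest route that avoids it altogether is to run GRR with $\Psi(x)=x^{2m}$ for an integer $m$ instead of a Gaussian $\Psi$; then $U_m=\int_0^T\int_0^T \frac{|B_u-B_v|^{2m}}{|u-v|^{2m\gamma'}}du\,dv$ has $\mathbb{E}[U_m]\leq C_m T^2 \sup_{u,v}|u-v|^{m(1-2\gamma')}/|u-v|^{\,0}<\infty$ (using $\mathbb{E}|B_u-B_v|^{2m}=c_m|u-v|^m$), GRR gives $\sup_{s\neq t}\frac{|B_t-B_s|}{|t-s|^\gamma}\leq C_m U_m^{1/(2m)}$ for $m$ large enough that $\gamma'-\gamma-\frac{1}{m}>0$ with appropriate bookkeeping, and then by Gaussian hypercontractivity (equivalence of moments, $\|B_u-B_v\|_{L^{p}}\leq \sqrt{p}\,\|B_u-B_v\|_{L^2}$) one shows $\mathbb{E}[U_m^{k}]\leq (Ck)^{mk}$ for all $k$, i.e. $\|U_m^{1/(2m)}\|_{L^{2mk}}\leq C\sqrt{k}$, which is exactly the Gaussian growth of moments that implies $\mathbb{E}[\exp(a\, U_m^{1/(2m)})]<\infty$ for all $a>0$. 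I would present this polynomial-$\Psi$ version as the actual proof, as it cleanly sidesteps the iterated-exponential integrability issue while still delivering the claimed exponential moment via moment bounds.
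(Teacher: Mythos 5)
Your final argument (the polynomial-$\Psi$ version of Garsia--Rodemich--Rumsey) is correct, but it takes a genuinely different route from the paper. The paper runs the dyadic chaining argument from the classical proof of Kolmogorov's continuity theorem, keeping explicit track of the $k$-th moment constants: it bounds $\mathbb{E}\bigl[\sup_{s\neq t}|B_t-B_s|^k/|t-s|^{\gamma k}\bigr]^{1/k}\lesssim ((k-1)!!)^{1/k}\sim\sqrt{k}$ and then sums the Taylor series of the exponential, using $\sum_k (Ca)^k/k!!<\infty$. You instead get the pointwise domination $\sup_{s\neq t}|B_t-B_s|/|t-s|^\gamma\leq C_m U_m^{1/(2m)}$ from GRR with $\Psi(x)=x^{2m}$, and then derive the same $\sqrt{k}$-growth of $L^k$-norms via Minkowski's integral inequality and Gaussian moment equivalence. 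Both proofs ultimately rest on the identical fact that sub-Gaussian moment growth of the H\"older seminorm yields exponential (indeed square-exponential) integrability; the paper's version is self-contained and elementary, while yours outsources the chaining to GRR and is shorter once GRR is granted. Two small remarks: your opening detour through the Gaussian $\Psi(x)=\exp(\lambda x^2)$ with the crude bound $\Psi^{-1}(y)\leq\sqrt{y}$ is a red herring --- the standard way to run that version keeps $\Psi^{-1}(y)=\sqrt{\log^+(y)/\lambda}$, which gives $\sup\lesssim 1+\sqrt{\log^+ U}$ and only requires $\mathbb{E}[U]<\infty$, entirely avoiding the iterated exponential you worry about --- but since you discard that route, it does not affect correctness. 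Also, in the polynomial version you should state the admissibility conditions ($\gamma<\gamma'<1/2$ and $\gamma'-\gamma>1/m$, so that the GRR integral converges and $\mathbb{E}[U_m]<\infty$ simultaneously); your ``appropriate bookkeeping'' does check out, but it is worth writing down.
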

\begin{proof}
Without loss of generality, set $T=1$. Remark first that we have for $k\in \mathbb{N}$
\[
\mathbb{E}[|B_t-B_s|^k]\leq |t-s|^{k/2} (k-1)!!
\]
We follow the classical proof of Kolmogorov's continuity theorem (refer for example to \cite[Theorem 10.1]{SchillingPartzsch+2012}). For $D_m:=2^{-m}\mathbb{N}_0\cap [0,1)$ and $D=\cup_{m}D_m$ set
\[
\Delta_m=\{ (s,t)\in D_m\times D_m :\ |t-s|\leq 2^{-m}\}.
\]
We then have for $\sigma_j:= \sup_{(s,t)\in \Delta_j} |B(t)-B(s)|$ the bound
\[
\mathbb{E}[\sigma_j^k]\leq \sum_{(s,t)\in \Delta_j}\mathbb{E}[|B_t-B_s|^k]\leq  2\cdot 2^{j(1-k/2)})(k-1)!!
\]
Following further the proof of Kolmogorov's continuity theorem as in \cite[Theorem 10.1]{SchillingPartzsch+2012}, we obtain
\begin{align*}
   \mathbb{E}\left[\left(\sup_{s\neq t\in D} \frac{|B_t-B_s|}{|t-s|^\gamma}\right)^k\right]^{1/k}&\leq 2^{1+\gamma} \sum_{j=0}^\infty 2^{j\gamma}\mathbb{E}[\sigma_j^k]^{1/k}\\
   &\leq 2^{1/k}\cdot 2^{1+\gamma} ((k-1)!!)^{1/k}\sum_{j=0}^\infty 2^{j\gamma}(2^{-j(1/2-1/k)}).
\end{align*}
Now let $k_0$ be the smallest natural such that $\gamma<1/2-1/k_0$. We then obtain for any $k\geq k_0$
\begin{align*}
    \mathbb{E}\left[\left(\sup_{s\neq t\in D} \frac{|B_t-B_s|}{|t-s|^\gamma}\right)^k\right]^{1/k}&\leq 2^{1/k}\cdot 2^{1+\gamma} ((k-1)!!)^{1/k}\sum_{j=0}^\infty 2^{j(\gamma+1/k-1/2)}\\
    &\leq 2^{1/k}\cdot 2^{1+\gamma} ((k-1)!!)^{1/k} \sum_{j=0}^\infty 2^{j(\gamma+1/k_0-1/2)}\\
    &\leq C 2^{1/k}\cdot 2^{1+\gamma} ((k-1)!!)^{1/k}.
\end{align*}
Therefore, we have 
\begin{align*}
    \sum_{k=k_0}^\infty \frac{a^k}{k!}\mathbb{E}\left[\left(\sup_{s\neq t\in [0,1]} \frac{|B_t-B_s|}{|t-s|^\gamma}\right)^k\right]&\leq 2\sum_{k=k_0}^\infty \frac{( 2^{1+\gamma}C a)^k}{k!!}<\infty,
\end{align*}
which proves the claim.
\end{proof}
\section*{Acknowledgement} The author wishes to thank Cyril Labb\'e for early discussions and Martina Hofmanová, Jörn Wichmann and Emanuela Gussetti for comments and suggestions on a preliminary version of this work. This project has received funding from the European Research Council under the European Union’s Horizon 2020 research
and innovation programme (grant agreement No.754362 and No.949981).
	\begin{figure}[ht]
	    \centering
	  \includegraphics[height=10mm]{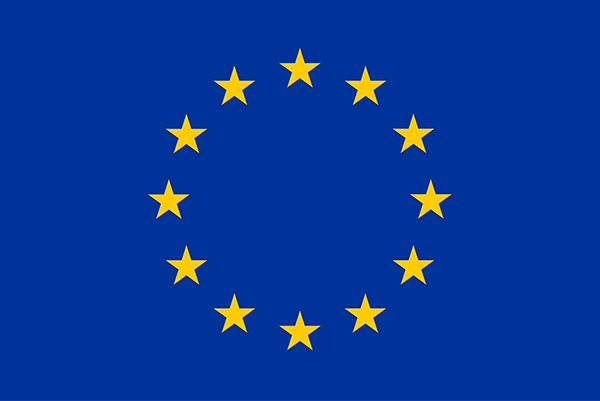}
	\end{figure}

\end{subappendices}

\bibliographystyle{alpha}
\bibliography{main}

\end{document}